\DeclareMathOperator{\rank}{rank}
\DeclareMathOperator{\dif}{d}
\newcommand{\Cal}{\mathcal{C}}
\renewcommand{\H}{\mathscr{H}}
\newcommand{\Fa}{\mathcal{F}}
\newcommand{\J}{\mathcal{J}}
\newcommand{\tD}{\mathscr{D}}
\newcommand{\tB}{\mathscr{B}}
\newcommand{\ol}{\mathcal{O}}
\def \a{\alpha}
\def \G{\Gamma}
\def \l{\lambda}
\def \O{\Omega}
\def \phi{\varphi}
\def \Phi{\varPhi}
\def \p{\pi}
\def \r{\rho}
\def \s{\sigma}
\def \t{\tau}
\def \R{\mathbb{R}}
\def \Hq{\mathbb{H}\,}
\def \C{\mathbb{C}\,}
\def\widecheckg{g^{\hspace*{-2.5pt}\vbox to 5pt{\hbox to
0pt{\LARGE$\check{}$}}}\hspace*{2pt}}
\def\widecheckl{\lambda^{\hspace*{-3.5pt}\vbox to 8pt{\hbox to
0pt{\LARGE$\check{}$}}}\hspace*{2pt}}
\begin{document}

\title{Twistor Theory for CR quaternionic manifolds and related structures}
\author{S.~Marchiafava, L.~Ornea, R.~Pantilie}
\thanks{L.O.\ and R.P.\ acknowledge that this work was partially supported by CNCSIS (Rom\^ania), through the Project no.\ 529/06.01.2009,
PN II - IDEI code~1193/2008;
also, R.P.\ acknowledges that this work was partially supported by the Visiting Professors Programme of GNSAGA-INDAM of C.N.R. (Italy),
and by the ``Agreement for direct cultural and scientific cooperation'' between Rome and Bucharest.\\
\indent
S.M.\ acknowledges that this work was done under the program of GNSAGA-INDAM of C.N.R. and PRIN07 ``Geometria Riemanniana e strutture
differenziabili'' of MIUR (Italy).}
\email{\href{mailto:marchiaf@mat.uniroma1.it}{marchiaf@mat.uniroma1.it},
       \href{mailto:liviu.ornea@imar.ro}{liviu.ornea@imar.ro},
       \href{mailto:radu.pantilie@imar.ro}{radu.pantilie@imar.ro}}
\address{S.~Marchiafava, Dipartimento di Matematica, Istituto ``Guido~Castelnuovo'',
Universit\`a degli Studi di Roma ``La Sapienza'', Piazzale Aldo~Moro, 2 - I 00185 Roma - Italia}
\address{L.~Ornea, Universitatea din Bucure\c sti, Facultatea de Matematic\u a, Str.\ Academiei nr.\ 14,
70109, Bucure\c sti, Rom\^ania, \emph{also,}
 Institutul de Matematic\u a ``Simion~Stoilow'' al Academiei Rom\^ane,
C.P. 1-764, 014700, Bucure\c sti, Rom\^ania}
\address{R.~Pantilie, Institutul de Matematic\u a ``Simion~Stoilow'' al Academiei Rom\^ane,
C.P. 1-764, 014700, Bucure\c sti, Rom\^ania}
\subjclass[2010]{Primary 53C28, Secondary 53C26}
\keywords{Quaternionic Geometry, CR Geometry, Twistor Theory}

\newtheorem{thm}{Theorem}[section]
\newtheorem{lem}[thm]{Lemma}
\newtheorem{cor}[thm]{Corollary}
\newtheorem{prop}[thm]{Proposition}

\theoremstyle{definition}

\newtheorem{defn}[thm]{Definition}
\newtheorem{rem}[thm]{Remark}
\newtheorem{exm}[thm]{Example}

\numberwithin{equation}{section}

\maketitle
\thispagestyle{empty}
\vspace{-10mm}
\section*{Abstract}
\begin{quote}
{\footnotesize
In a general and non metrical framework, we introduce the class of CR quaternionic manifolds containing the class of quaternionic manifolds,
whilst in dimension three it particularizes to, essentially, give the conformal manifolds.
We show that these manifolds have a rich natural Twistor Theory and, along the way, we obtain a heaven space construction
for quaternionic ma\-ni\-folds.}
\end{quote}

\section*{Introduction}

\indent
In this paper, we introduce the class of \emph{CR quaternionic manifolds} which contains the class of
quaternionic manifolds whilst in dimension three it, essentially, reduces to the class of conformal manifolds endowed with
the twistorial structure of~\cite{LeB-CR_twistors}\,.
Particular classes of such manifolds were already considered, see for example \cite{Bi-qK_heaven}\,,
\cite{AleKam-Annali08}\,, \cite{BejFar}\,, under certain dimensional assumptions and/or in a metrical framework.\\
\indent
To define the corresponding notion of almost CR quaternionic structure we introduce
and study the \emph{CR quaternionic vector spaces}.
The idea of the definition of these structures is based on the fact that
a linear quaternionic structure on a real vector space $E$ has an
associated $2$-sphere $Z$ of `admissible' linear complex structures on $E$
(the left multiplications by unit imaginary quaternions). Then a linear CR quaternionic
structure on a vector space $U$ is given by an injective linear map $\iota$ from $U$
into a quaternionic vector space $E$ such that, for any admissible $J\in Z$,
we have ${\rm im}\,\iota+J({\rm im}\,\iota)=E$\,; that is, $(E,J,\iota)$ corresponds to a linear CR structure on $U$
(see Section \ref{section:lin_cr_co-cr}\,).
Further, by duality we obtain the notion of \emph{linear co-CR quaternionic structure} (see Section \ref{section:lin_f-q}\,).\\
\indent
One of the main ingredients in the study of these vector spaces is a faithful covariant
functor from the category determined by them to the category of holomorphic vector bundles over the sphere
(Theorem \ref{thm:cr_co-cr_q_vector_spaces}\,). This leads to the classification of (co-)CR quaternionic vector spaces
(Corollary \ref{cor:classification_cr_q}\,).\\
\indent
To define the adequate notion of integrability for the almost CR quaternionic structures recall that an almost quaternionic structure
is integrable (in the sense of \cite{Sal-dg_qm}\,) if and only if there exists a compatible connection such that the associated
almost complex structure on the space of admissible linear complex structures be integrable (see \cite[Remark 2.10(2)\,]{IMOP}\,).
The straight generalization of this fact leads to the introduction of a suitable almost CR structure (see the paragraph before the
Definition \ref{defn:cr_q}\,) on the space of admissible linear complex structures of the quaternionic bundle defining an almost
CR quaternionic structure.\\
\indent
This construction is motivated, also, by Theorem \ref{thm:cr_q}\,.
Furthermore, similarly to the three-dimensional case, the twistor space of a CR quaternionic manifold $M$ is a CR manifold $Z$.
We prove (Corollary \ref{cor:realiz_cr_q_1}\,) that any real-analytic CR quaternionic structure on a manifold $M$
is induced by an embedding of $M$ in a (germ unique) quaternionic manifold $N$
such that $TN|_M$ is generated, as a quaternionic vector bundle, by $TM$.\\
\indent
We mention that the notions introduced in this paper are discussed and compared through several examples. Also, some
embedding and decomposition results, reducing the study to special examples of CR quaternionic structures,
are given (Corollary \ref{cor:classification_cr_q}\,, Theorem \ref{thm:realiz_cr_q}\,, and Corollary \ref{cor:realiz_cr_q_1}\,).\\
\indent
It is well known (see, for example, \cite{PanWoo-sd} and the references therein) that the study of the twistorial properties
of the three-dimensional conformal manifolds, endowed with a conformal connection, significantly contributed to a better understanding of
the anti-self-dual (Einstein) manifolds. One of the aims of this paper is to give a first indication that the study of
CR quaternionic manifolds will lead to a better understanding of quaternionic(-K\"ahler) manifolds.\\
\indent
The \emph{co-CR quaternionic manifolds}, which appear as the natural generalizations of both the quaternionic manifolds and
the three-dimensional Einstein--Weyl spaces, will be studied elsewhere \cite{fq_2}\,.

\section{Linear CR and co-CR structures} \label{section:lin_cr_co-cr}

\indent
Unless otherwise stated, all the vector spaces are assumed real and finite dimensional
and all the linear maps are assumed real.\\
\indent
A \emph{linear complex structure} on a vector space $U$ is a linear map $J:U\to U$ such that
$J^2=-{\rm Id}_U$\,. Then $U^{\C}$ is the direct sum of the $\pm{\rm i}$ eigenspaces of $J$.
Hence, any linear complex structure on $U$ is given by a complex vector subspace
$C\subseteq U^{\C}$ such that $U^{\C}=C\oplus\overline{C}$. More generally, we have the following,
mutually dual, definitions.

\begin{defn} \label{defn:lin_(co-)cr_str}
Let be $U$  a vector space and $C\subseteq U^{\C}$ a complex vector subspace.\\
\indent
1) $C$ is a \emph{linear CR structure} on $U$ if $C\cap\overline{C}=\{0\}$.\\
\indent
2) $C$ is a \emph{linear co-CR structure} on $U$ if $C+\overline{C}=U^{\C}$.
\end{defn}

\indent
A complex vector subspace $C\subseteq U^{\C}$ is a
linear CR structure on $U$ if and only if its annihilator $\bigl\{\,\a\in\bigl(U^{\C}\bigr)^{\!*}\,|\,\a|_C=0\,\bigr\}$
is a linear co-CR structure on $U^{\!*}$ .\\
\indent
Let $U$ and $U'$ be vector spaces endowed with linear (co-)CR structures $C$ and $C'$, respectively.
A \emph{(co-)CR linear map} is a linear map $t:(U,C)\to(U',C')$ such that $t(C)\subseteq C'$; if, further, $t$ is injective then
we say that $(U,C)$ is a (co-)CR vector subspace of $(U',C')$\,.

\begin{prop} \label{prop:cr_triple}
For any CR vector space $(U,C)$ we have the following facts:\\
\indent
{\rm (i)} The canonical map $\iota:U\to U^{\C}\!/C$ is injective.\\
\indent
{\rm (ii)} $C=\iota^{-1}\bigl({\rm ker}(J+{\rm i})\bigr)$\,, where $J$ is the linear complex structure of $E=U^{\C}\!/C$.\\
\indent
{\rm (iii)} ${\rm im}\,\iota+J({\rm im}\,\iota)=E$.\\
\indent
{\rm (iv)} $(E,\iota)$ is unique, up to complex linear isomorphisms, with the properties {\rm (i)}\,, {\rm (ii)} and {\rm (iii)}\,.
\end{prop}
\begin{proof}
The first assertion is equivalent to $C\cap\overline{C}=\{0\}$\,.\\
\indent
To prove (ii)\,, let $\iota^{1,0}$ be the composition of the complexification of $\iota$ followed by the projection from $E^{\C}$
onto the ${\rm i}$ eigenspace of $J$. Then $\iota^{1,0}(u\otimes\l)=\l\,\iota(u)=u\otimes\l+C$\,, for any $u\in U$ and $\l\in\C$.
Thus, $\iota^{-1}\bigl({\rm ker}(J+{\rm i})\bigr)={\rm ker}\bigl(\iota^{1,0}\bigr)=C$.\\
\indent
Assertion (iii) is an immediate consequence of the fact that the complex vector space $U^{\C}$ is generated by $U$.\\
\indent
Let $(E',J')$ be a complex vector space and let $\iota':U\to E'$ be an injective linear map such that
(1) $C=\iota^{-1}\bigl({\rm ker}(J'+{\rm i})\bigr)$ and (2) ${\rm im}\,\iota'+J'({\rm im}\,\iota')=E'$.
Then the complex extension of $\iota'$ to $U^{\C}$ is, by (2)\,, surjective and, by (1)\,, its kernel is $C$.
This shows that (iv) holds.
\end{proof}

\indent
The following two results are immediate consequences of Proposition \ref{prop:cr_triple}\,.

\begin{cor} \label{cor:cr_triple}
Any linear CR structure on a vector space $U$ corresponds to a pair $(E,\iota)$\,, unique up to complex linear isomorphisms,
where $E$ is a complex vector space and $\iota:U\to E$ is an injective linear map
such that ${\rm im}\,\iota+J({\rm im}\,\iota)=E$, with $J$ the linear complex structure of $E$.
\end{cor}

\begin{cor} \label{cor:lin_cr_map_triple}
Let $C$ and $C'$ be linear CR structures on $U$ and $U'$ corresponding to the pairs $(E,\iota)$ and $(E',\iota')$\,, respectively.\\
\indent
Let $t:U\to U'$ be a map. Then the following assertions are equivalent:\\
\indent
\quad{\rm (i)} $t$ is a CR linear map.\\
\indent
\quad{\rm (ii)} There exists a complex linear map $\widetilde{t}:E\to E'$ with the property
$\iota'\circ t=\widetilde{t}\circ\iota$\,.\\
\indent
Furthermore, if\/ {\rm (i)} holds then there exists a unique $\widetilde{t}$ satisfying {\rm (ii)}\,.
\end{cor}

\indent
The duals of Proposition \ref{prop:cr_triple} and of Corollaries \ref{cor:cr_triple} and \ref{cor:lin_cr_map_triple}
can be easily formulated.

\section{CR quaternionic and co-CR quaternionic vector spaces} \label{section:lin_f-q}

\indent
Let $\Hq$ be the associative algebra of quaternions. The automorphism group of $\Hq$ is
${\rm SO}(3)$ acting trivially on $\R$ and canonically on ${\rm Im}\Hq\,(=\R^3)$\,.\\
\indent
A \emph{linear quaternionic structure} on a vector space $E$ is an equivalence
class of morphisms of associative algebras from $\Hq$ to ${\rm End}(V)$ where two
such morphisms $\s$ and $\t$ are equivalent if there exists $a\in{\rm SO}(3)$
such that $\t=\s\circ a$\,. A \emph{quaternionic vector space} is a vector
space endowed with a linear quaternionic structure \cite{AleMar-Annali96} (see \cite{IMOP}\,).\\
\indent
Let $E$ be a quaternionic vector space and let $\s$ be a representative of its linear quaternionic structure.
Obviously, the unit sphere $Z$ of $\s({\rm Im}\Hq)$ depends only of the linear quaternionic structure of $E$\,;
any $J\in Z$ is an \emph{admissible linear complex structure} on $E$.\\
\indent
Also, $\s^*:\Hq\to{\rm End}(E^*)$ such that $\s^*(q)$ is the transpose of $\s(\overline{q})$, $(q\in\Hq)$\,,
defines a linear quaternionic structure on $E^*$ which depends only of the linear quaternionic structure of $E$.

\begin{defn} \label{defn:lin_q_str}
A \emph{linear CR quaternionic structure} on a vector space $U$ is a pair $(E,\iota)$ where
$E$ is a quaternionic vector space and $\iota:U\to E$ is an injective linear map
such that ${\rm im}\,\iota+J({\rm im}\,\iota)=E$, for any $J\in Z$.\\
\indent
A \emph{linear co-CR quaternionic structure} on $U$ is a pair $(E,\r)$ with $\r:E\to U$
a (surjective) linear map such that $(E^*,\r^*)$ is a linear CR quaternionic structure on $U^*$.\\
\indent
A \emph{(co-)CR quaternionic vector space} is a vector space endowed with a linear (co-)CR quaternionic structure.
\end{defn}

\begin{exm} \label{exm:U_k}
1) Let $q_1\,,\ldots,q_{k+1}\in S^2$, $(k\geq1)$\,, be such that $q_i\neq\pm q_j$\,, if $i\neq j$\,.
For $j=1,\ldots,k$ let $e_j=(\underbrace{0,\ldots,0}_{j-1},q_j,q_{j+1},\underbrace{0,\ldots,0}_{k-j})$\,.
Denote $V_0=\R$ and, for $k\geq1$\,, let $V_k=\R^{k+1}+\R e_1+\ldots+\R e_k$\,. If we denote $U_k=V_k^{\perp}$
then $(U_k,\Hq^{\!k+1})$ is a CR quaternionic vector space; note that, $\dim U_k=2k+3$\,, $U_0={\rm Im}\Hq$.\\
\indent
2) Let $V'_0=\{0\}$ and, for $k\geq1$\,, let $V'_k$ be the vector subspace of $\Hq^{\!2k+1}$ formed of all vectors of the form
$(z_1\,,\overline{z_1}+z_2\,{\rm j}\,, z_3-\overline{z_2}\,{\rm j}\,,\ldots, \overline{z_{2k-1}}+z_{2k}\,{\rm j}\,,-\overline{z_{2k}}\,{\rm j})\,,$
where $z_1\,,\ldots,z_{2k}$ are complex numbers. If we denote $U'_k={V'_k}^{\perp}$ then $(U'_k,\Hq^{2k+1})$ is a CR quaternionic vector space.
Note that, $\dim U'_k=4k+4$\,, and $U'_0=\Hq$.\\
\indent
3) Let $E=\Hq^{\!k}$ and let $U=({\rm Im}\Hq\!)^l\times\Hq^{\!k-l}$, for some $k\geq l$\,.
Then $(U,E)$ is a CR quaternionic vector space.
\end{exm}

\indent
Recall (see \cite{IMOP}\,) that a linear map $t:E\to E'$ between quaternionic vector spaces is \emph{quaternionic},
with respect to some map $T:Z\to Z'$ between the spaces of admissible linear complex structures on $E$ and $E'$,
if $t\circ J=T(J)\circ t$, for any $J\in Z$. Then, if $t\neq0$\,, we have that $T$ is unique and an orientation
preserving isometry.\\
\indent
A map $t:(U,E,\iota)\to(U',E',\iota')$ between CR quaternionic vector spaces is \emph{CR quaternionic linear}
(with respect to some map $T:Z\to Z'$\,) if there exists a map $\widetilde{t}:E\to E'$ which is quaternionic linear
(with respect to $T$) such that $\iota'\circ t=\widetilde{t}\circ\iota$.\\
\indent
By duality, we obtain the notion of \emph{co-CR quaternionic linear map}.\\
\indent
Note that, if $(U,E,\iota)$ is a CR quaternionic vector space then the inclusion $\iota:U\to E$ is CR quaternionic linear.
Dually, if $(U,E,\r)$ is a co-CR quaternionic vector space then the projection $\r:E\to U$ is co-CR quaternionic linear.

\newpage

\section{(Co-)CR quaternionic vector spaces\\
and holomorphic bundles over the sphere} \label{section:cr_q_co-cr_q_lin_map}

\indent
Let $(U,E,\iota)$ be a CR quaternionic vector space and let $Z(=S^2)$ be the space of admissible linear
complex structures of $E$. For $J\in Z$, we denote $U^J=\iota^{-1}\bigl(E^J\bigr)$\,, where $E^J$ is the eigenspace of $J$ corresponding
to $-{\rm i}$\,. Dually, $U^J=\r\bigl(E^J\bigr)$ for a co-CR quaternionic vector space $(U,E,\r)$\,.

\begin{prop} \label{prop:cr_q_lin_map}
Let $(U,E,\iota)$ and $(U',E',\iota')$ be CR quaternionic vector spaces. Let $t:U\to U'$ be a nonzero linear map
and let $T:Z\to Z'$ be a map.\\
\indent
Then the following assertions are equivalent:\\
\indent
\quad{\rm (i)} $t$ is CR quaternionic, with respect to $T$.\\
\indent
\quad{\rm (ii)} $T$ is a holomorphic diffeomorphism and, for any $J\in Z$\,, we have
\begin{equation} \label{e:cr_q_lin_map}
t\bigl(U^J\bigr)\subseteq(U')^{T(J)}\;.
\end{equation}
\indent
Furthermore, if assertion {\rm (i)} or {\rm (ii)} holds then there exists a unique linear map
$\widetilde{t}:E\to E'$ which is quaternionic, with respect to $T$, and such that
$\iota'\circ t=\widetilde{t}\circ\iota$\,. \\
\indent
The dual statement holds true, for nonzero linear maps between co-CR quaternionic vector spaces.
\end{prop}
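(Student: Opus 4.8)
The plan is to prove the two implications and the uniqueness separately, the co-CR statement following at the end by duality (Proposition~\ref{prop:dual_cr_co-cr}). Throughout, $\iota^{\C},t^{\C}$ denote complexifications; I recall from Definition~\ref{defn:lin_q_str} that $\mathrm{im}\,\iota+J(\mathrm{im}\,\iota)=E$ for every $J\in Z$, that $U^J=(\iota^{\C})^{-1}(E^J)$, and that, by Proposition~\ref{prop:induced_cr}(v), $\iota^{\C}$ induces a complex linear isomorphism $U^{\C}\!/U^J\cong(E,J)$. Uniqueness of the extension is then immediate: if $\widetilde t$ is quaternionic with respect to $T$ and $\iota'\circ t=\widetilde t\circ\iota$, then writing an arbitrary $y\in E$ as $y=\iota(a)+J\iota(b)$ (possible by the CR condition) forces $\widetilde t(y)=\iota'(t(a))+T(J)\,\iota'(t(b))$, so $\widetilde t$ is determined by $t$ and $T$.

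For (i)$\Rightarrow$(ii): such a $\widetilde t$ is nonzero, since $\iota'\circ t=\widetilde t\circ\iota$ with $\iota'$ injective and $t\neq0$. A nonzero quaternionic linear map has $T$ a holomorphic diffeomorphism (cf.~\cite{IMOP}; equivalently, $\widetilde t$ induces a nonzero morphism of the eigenspace bundles over $Z=\C\!P^1$, whose base map is $T$, hence holomorphic). The inclusion \eqref{e:cr_q_lin_map} is then formal: for $u\in U^J$ we have $\iota^{\C}(u)\in E^J$, so $\widetilde t^{\C}(\iota^{\C}(u))\in(E')^{T(J)}$ because $\widetilde t$ maps $E^J$ into $(E')^{T(J)}$; as $\widetilde t^{\C}\circ\iota^{\C}=(\iota')^{\C}\circ t^{\C}$, this gives $t^{\C}(u)\in(U')^{T(J)}$.

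For (ii)$\Rightarrow$(i): fix $J\in Z$. The inclusion $t(U^J)\subseteq(U')^{T(J)}$ says exactly that $t$ is CR linear from $U$, endowed with the linear CR structure induced by $(\iota,J)$, to $U'$, endowed with the one induced by $(\iota',T(J))$. Hence Corollary~\ref{cor:induced_lin_cr_map} produces a unique complex linear $\widetilde t_J:(E,J)\to(E',T(J))$ with $\iota'\circ t=\widetilde t_J\circ\iota$. Once I show that $\widetilde t_J$ is independent of $J$, its common value $\widetilde t$ is, by construction, complex linear from $(E,J)$ to $(E',T(J))$ for every $J$, that is, quaternionic with respect to $T$, and satisfies $\iota'\circ t=\widetilde t\circ\iota$; this proves (i) and, with the uniqueness above, the final assertion. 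The $J$-independence is the crux.

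To prove it, I would pass to $Z=\C\!P^1$ through the holomorphic vector bundles attached to $(U,E,\iota)$ and $(U',E',\iota')$ in this section (the diagram already invoked in the proof of Proposition~\ref{prop:ineq_l-k}): the eigenspaces $U^J$ and $(U')^{T(J)}$ form holomorphic subbundles of the trivial bundles $U^{\C}\times Z$ and $U'^{\,\C}\times Z$ (the latter again holomorphic because $T$ is), and hypothesis (ii) says precisely that the \emph{constant} morphism $t^{\C}$ carries the first into the second. Thus $t^{\C}$ descends to a holomorphic morphism of the quotient bundles, whose fibres are $U^{\C}\!/U^J\cong(E,J)$ and $U'^{\,\C}\!/(U')^{T(J)}\cong(E',T(J))$ by Proposition~\ref{prop:induced_cr}(v); read through these isomorphisms it is exactly the family $J\mapsto\widetilde t_J$. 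Hence $\{\widetilde t_J\}$ is a holomorphic section of the bundle over $\C\!P^1$ with fibre $\mathrm{Hom}_{\C}\bigl((E,J),(E',T(J))\bigr)$, and it is \emph{real}, being induced by the complexification of the real map $t$. The expected mechanism is that this $\mathrm{Hom}$-bundle is trivial and that its real holomorphic sections are exactly the maps that are quaternionic with respect to $T$ --- the linear content of the correspondence of Theorem~\ref{thm:cr_co-cr_q_vector_spaces} --- so that $\widetilde t_J$ must be constant, equal to a single quaternionic $\widetilde t$. I expect this last step, namely identifying the triviality of the $\mathrm{Hom}$-bundle together with the characterisation of its real sections (equivalently, the vanishing of the holomorphic sections of the complementary negative-degree bundle), to be the main obstacle; the remainder is bookkeeping with Corollary~\ref{cor:induced_lin_cr_map} and the definitions, and the dual statement then follows by duality.
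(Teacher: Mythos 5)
Your uniqueness argument, your proof of (i)$\Rightarrow$(ii), and your construction of the holomorphic family $J\mapsto\widetilde{t}_J$ are correct, and up to that point you are in fact reproducing the paper's proof: the paper's bundle morphism $\mathcal{T}:\mathcal{E}\to\mathcal{E}'$ with $\mathcal{T}|_{\mathcal{U}}=T\times t$ is exactly your descended morphism of quotient bundles (the two descriptions of $\mathcal{E}$ agree by Proposition \ref{prop:induced_cr}(v)), holomorphic for the same reason. The genuine gap is the step you yourself flag: you never prove that the family $\{\widetilde{t}_J\}$ is constant. You only name an ``expected mechanism'' --- triviality of ${\rm Hom}_{\C\!}(\mathcal{E},\mathcal{E}')$ together with the identification of its conjugation-invariant holomorphic sections with the maps quaternionic with respect to $T$ --- and you point to Theorem \ref{thm:cr_co-cr_q_vector_spaces} for its content; but that theorem is proved in the paper \emph{by means of} Proposition \ref{prop:cr_q_lin_map} (its proof invokes the dual of the implication (ii)$\Rightarrow$(i)), so an appeal to it here would be circular. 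Moreover, the computation that makes any such argument run, namely $\mathcal{E}\cong 2k\,\ol(1)$ (whence ${\rm Hom}_{\C\!}(\mathcal{E},\mathcal{E}')\cong 4kk'\,\ol$ and $\dim_{\C}H^0=4kk'$), appears nowhere in your write-up, although the paper establishes it inside this very proof.

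For the record, your mechanism is correct and can be completed without circularity: the conjugation $(J,f)\mapsto(-J,f)$ of ${\rm Hom}_{\C\!}(\mathcal{E},\mathcal{E}')$ is an antiholomorphic, fibrewise antilinear involution covering the antipodal map, so its fixed holomorphic sections form a real form, of real dimension $4kk'$, of $H^0\cong\C^{4kk'}$; your section is fixed, since by the uniqueness in Corollary \ref{cor:induced_lin_cr_map} one has $\widetilde{t}_{-J}=\widetilde{t}_J$ as real maps; and the constant sections determined by maps $E\to E'$ quaternionic with respect to $T$ already fill a real subspace of dimension $4kk'$, forcing equality and hence constancy. (Both you and the paper use tacitly that $T$ intertwines the antipodal maps --- without this neither conjugation makes sense; it can be extracted from \eqref{e:cr_q_lin_map} and $t\neq0$, but it deserves a word.) The paper's own ending avoids the Hom-bundle analysis altogether: it uses the natural isomorphism $E^{\C}=H^0(Z,\mathcal{E})$, also established inside the proof, to convert $\mathcal{T}$ into a single complex linear map $\t:E^{\C}\to(E')^{\C}$ with $\t|_U=t$ and $\t\bigl(E^J\bigr)\subseteq(E')^J$ for all $J$; then $\t=\mathcal{T}_J\oplus\mathcal{T}_{-J}$ with respect to $E^{\C}=E^{-J}\oplus E^J$, and since a real map commutes with $J$ if and only if it commutes with $-J$, the uniqueness of complex linear extensions gives $\mathcal{T}_J=\mathcal{T}_{-J}$; hence $\t$ is the complexification of the real map $\widetilde{t}:=\mathcal{T}_J$, which is therefore independent of $J$, quaternionic with respect to $T$, and satisfies $\iota'\circ t=\widetilde{t}\circ\iota$. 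That global-sections trick is precisely what replaces the section-counting you left open.
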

\begin{proof}
It is obvious that if (i) holds then \eqref{e:cr_q_lin_map} holds, for any $J\in Z$.
If, further, $t\neq0$, the fact that $T$ is a holomorphic diffeomorphism is an immediate consequence
of \cite[Proposition 1.5]{IMOP}\,. This proves (i)$\Longrightarrow$(ii)\,.\\
\indent
To prove the converse, let $\mathcal{E}=Z\times E\to Z$ be the complex vector bundle
whose fibre over each $J\in Z$ is $(E,J)$\,. Alternatively, $\mathcal{E}$ is the quotient
of the trivial holomorphic vector bundle $Z\times E^{\C}$ through the holomorphic vector bundle $E^{0,1}\to Z$,
whose fibre over each $J\in Z$ is $E^J$. It follows that $\mathcal{E}$ is a holomorphic vector bundle,
over $Z=\C\!P^1$, isomorphic to $2k\ol(1)$\,, where $\dim E=4k$ and $\ol(1)$
is the dual of the tautological line bundle over $\C\!P^1$. Moreover, from the long exact sequence of
cohomology groups of $0\longrightarrow E^{0,1}\longrightarrow Z\times E^{\C}\longrightarrow\mathcal{E}\longrightarrow 0$
we obtain a natural complex linear isomorphism $E^{\C}=H^0(Z,\mathcal{E})$\,. Obviously, $\mathcal{U}=Z\times U$
is a CR submanifold of $\mathcal{E}$ (equivalently, for any
$(J,u)\in\mathcal{U}$, the complex structure of $T_{(J,u)}\mathcal{E}$ induces a linear CR structure
on $T_{(J,u)}\mathcal{U}$\,). Define, similarly, $\mathcal{E}'$ and $\mathcal{U}'$ and note that \eqref{e:cr_q_lin_map} holds,
for any $J\in Z$, if and only if there exists a (necessarily unique) morphism of complex vector
bundles $\mathcal{T}:\mathcal{E}\to\mathcal{E}'$, over $T$, such that $\mathcal{T}|_{\mathcal{U}}=T\times t$.
Thus, if (ii) holds then,
by identifying $Z=Z'(=\C\!P^1)$\,, $T={\rm Id}_Z$, there exists a
holomorphic section $\mathcal{T}$ of ${\rm Hom}_{\C\!}(\mathcal{E},\mathcal{E}')$ such that
$\mathcal{T}|_{\mathcal{U}}={\rm Id}_Z\times t$. Then $\mathcal{T}$ induces a complex linear map
$\t:E^{\C}\to(E')^{\C}$ such that $\t|_U=t$ and, for any $J\in Z$, we have
$\t(E^J)\subseteq(E')^J$; moreover, $\t$ descends to the complex linear map
$\mathcal{T}_J:(E,J)\to(E',J)$\,. It follows that $\t=\mathcal{T}_J\oplus\mathcal{T}_{-J}$\,, $(J\in Z)$\,, and, in particular,
$\t$ is the complexification of some (real) linear map $\widetilde{t}:E\to E'$. This completes
the proof of (ii)$\Longrightarrow$(i)\,.
\end{proof}

\indent
Let $E$ be a quaternionic vector space, $E^{0,1}$ the holomorphic vector subbundle
of $Z\times E^{\C}$ whose fibre over each $J\in Z$ is $E^J$, and $\mathcal{E}$ the
quotient of $Z\times E^{\C}$ through $E^{0,1}$.

\begin{defn}
1) Let $(U,E,\iota)$ be a CR quaternionic vector space. $\mathcal{U}=E^{0,1}\cap(Z\times U^{\C})$ is
called \emph{the holomorphic vector bundle of $(U,E,\iota)$}\,.\\
\indent
2) Let $(U,E,\r)$ be a co-CR quaternionic vector space.\;\emph{The holomorphic vector bundle
of $(U,E,\r)$} is the dual of the holomorphic vector bundle of $(U^*,E^*,\r^*)$\,.
\end{defn}
\indent
Next, we explain how the holomorphic vector bundle $\mathcal{U}$ of a co-CR quaternionic vector space $(U,E,\r)$
can be constructed directly, without passing to the dual CR quaternionic vector space.
For this, note that, with the same notations as above, $\r$ induces an injective morphism of holomorphic vector bundles
$E^{0,1}\to Z\times U^{\C}$. Then $\mathcal{U}$ is the quotient of $Z\times U^{\C}$ through $E^{0,1}$.
Furthermore, we have the following commutative diagram, where $\mathcal{R}$ is the `twistorial representation'
of $\r$:
\begin{displaymath}
\xymatrix{
         &                       &   0  \ar[d]                    &   0  \ar[d]               &  \\
         &                       & Z\times{\rm ker}\r^{\C} \ar[r] \ar[d] & Z\times{\rm ker}\r^{\C} \ar[d] &  \\
0 \ar[r] & E^{0,1} \ar[r] \ar[d] & Z\times E^{\C} \ar[r] \ar[d]^{{\rm Id}_Z\times\r^{\C}}  & \mathcal{E} \ar[r] \ar[d]^{\mathcal{R}} & 0 \\
0 \ar[r] & E^{0,1} \ar[r]        & Z\times U^{\C} \ar[r] \ar[d]   & \mathcal{U} \ar[r] \ar[d] & 0 \\
         &                       &   0                            &   0                       &   }
\end{displaymath}
Then, similarly to the proof of Proposition \ref{prop:cr_q_lin_map}\,, there exists a natural complex
linear isomorphism $U^{\C}=H^0(Z,\mathcal{U})$\,. Moreover, the conjugation of $U^{\C}$ ($E^{\C}$)
induces a conjugation (that is, an involutive antiholomorphic diffeomorphism) on\/ $\mathcal{U}$ ($\mathcal{E}$)
which descends to the antipodal map on $Z$. Then there exists a natural isomorphism between $U$
and the vector space of holomorphic sections of\/ $\mathcal{U}$ which intertwines the conjugations.\\
\indent
Also, note that $H^1(Z,\mathcal{U})=0$ (this follows, for example, from the long exact sequence of cohomology
groups determined by the second row of the above diagram).\\
\indent
The following fact will be used later on.

\begin{prop} \label{prop:cr_q_holo_factor}
Let $(U,E,\iota)$ be a CR quaternionic vector space and let $\mathcal{U}$ be its holomorphic vector bundle.\\
\indent
The quotient of $Z\times U^{\C}$\! through $\mathcal{U}$ is isomorphic to $2k\ol(1)$\,, where $\dim E=4k$\,.
\end{prop}
\begin{proof}
The dual of the quotient of $Z\times U^{\C}$\! through $\mathcal{U}$ is the annihilator $\mathcal{V}$ of\/ $\mathcal{U}$ in $Z\times U^{\C}$\,.
By definition, $\mathcal{U}=\iota^{-1}\bigl(E^{0,1}\bigr)$ and, hence, $\mathcal{V}=\iota^*\bigl((E^*)^{0,1}\bigr)=(E^*)^{0,1}=2k\ol(-1)$\,.
\end{proof}

\indent
Let $(U,E,\r)$ and $(U',E',\r')$ be co-CR quaternionic vector space and let\/ $\mathcal{U}$ and\/ $\mathcal{U}'$,
respectively, be their holomorphic vector bundles. Let $t:(U,E,\r)\to(U',E',\r')$ be a co-CR quaternionic linear map,
with respect to some map $T:Z\to Z'$.
Then $t$ induces a morphism
of holomorphic vector bundles $\mathcal{T}:\mathcal{U}\to\mathcal{U}'$ which intertwines the conjugations.
Furthermore, the above diagram can be easily generalized with $t$ and $\mathcal{T}$ instead of $\r$
and $\mathcal{R}$.

\begin{thm} \label{thm:cr_co-cr_q_vector_spaces}
There exists a covariant functor $\Fa$ from the category of co-CR quaternionic vector spaces,
whose morphisms are the co-CR quaternionic linear maps, to the category of holomorphic vector
bundles over\/ $\C\!P^1$, given by \mbox{$\Fa(U)=\mathcal{U}$} and $\Fa(t)=\mathcal{T}$.\\
\indent
Moreover, if $\Fa(U)=\mathcal{U}$ and $\Fa(U')=\mathcal{U}'$ then for any morphism of holomorphic vector bundles
$\mathcal{T}:\mathcal{U}\to\mathcal{U}'$, which intertwines the conjugations, there exists a unique
co-CR quaternionic linear map $t:U\to U'$ such that $\Fa(t)=\mathcal{T}$.\\
\indent
Therefore two co-CR quaternionic vector spaces are isomorphic if and only if there exists an
isomorphism, which intertwines the conjugations, between their holomorphic vector bundles.
Furthermore, for any positive integer $k$ there exists a nonempty finite set of
isomorphism classes of co-CR quaternionic vector spaces $(U,E,\r)$ with $\dim\!E=4k$.
\end{thm}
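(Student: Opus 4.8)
The plan is to exhibit $\Fa$ as a faithful functor whose image, at the level of morphisms, consists exactly of the conjugation-intertwining bundle maps, and then to read off the classification and finiteness assertions. Throughout I use the identification $U^{\C}=H^0(Z,\mathcal{U})$ and the conjugation on $\mathcal{U}$ described above, under which $U$ corresponds to the conjugation-invariant holomorphic sections.

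First I would record that $\mathcal{U}$ is globally generated: by the bottom row of the diagram above it is a quotient of the trivial bundle $Z\times U^{\C}$, so the evaluation maps jointly surject onto every fibre. Consequently a bundle morphism out of $\mathcal{U}$ is determined by the induced map on $H^0(Z,\mathcal{U})$. Since $\Fa(t)$ induces $t^{\C}$ on global sections, the identities $(s\circ t)^{\C}=s^{\C}\circ t^{\C}$ and $\mathrm{Id}_U^{\C}=\mathrm{Id}$ give $\Fa(s\circ t)=\Fa(s)\circ\Fa(t)$ and $\Fa(\mathrm{Id}_U)=\mathrm{Id}_{\mathcal{U}}$; thus $\Fa$ is a covariant functor, and it is faithful because $t$ is recovered from $t^{\C}=H^0(\Fa(t))$.

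The main step is the converse assignment. Let $\mathcal{T}:\mathcal{U}\to\mathcal{U}'$, over a holomorphic diffeomorphism $T$ of $\C\!P^1$ (the base map of $\mathcal{T}$), intertwine the conjugations. Passing to global sections yields a complex linear map $t^{\C}:U^{\C}\to U'^{\,\C}$ which, because $\mathcal{T}$ intertwines the conjugations and these correspond to the conjugations of $U^{\C}$ and $U'^{\,\C}$, commutes with conjugation and hence is the complexification of a unique real map $t:U\to U'$. To see that $t$ is co-CR quaternionic I would note that, under $U^{\C}=H^0(Z,\mathcal{U})$, evaluation at $J$ is nothing but the projection $U^{\C}\to\mathcal{U}_J=U^{\C}/U^J$, since by the diagram the fibre of $E^{0,1}\subseteq Z\times U^{\C}$ over $J$ equals $U^J=\r(E^J)$; thus $\ker(\mathrm{ev}_J)=U^J$. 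Intertwining $\mathcal{T}$ with the evaluations then forces $t^{\C}(U^J)\subseteq (U')^{T(J)}$ for every $J\in Z$, which is precisely the dual of \eqref{e:cr_q_lin_map}; the dual statement of Proposition \ref{prop:cr_q_lin_map} now shows $t$ to be co-CR quaternionic with respect to $T$ (the case $t=0$ being trivial). Finally $\Fa(t)$ and $\mathcal{T}$ induce the same map $t^{\C}$ on $H^0$, so by global generation $\Fa(t)=\mathcal{T}$, and $t$ is unique since $t^{\C}=H^0(\mathcal{T})$.

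It remains to deduce the last two assertions. For the classification, an isomorphism $t$ of co-CR quaternionic vector spaces yields, together with $t^{-1}$, mutually inverse conjugation-intertwining bundle maps, so $\Fa(t)$ is such an isomorphism; conversely, given a conjugation-intertwining isomorphism $\mathcal{T}$, the previous step produces $t$ with $\Fa(t)=\mathcal{T}$ and, from $\mathcal{T}^{-1}$, a map $s$ with $\Fa(s)=\mathcal{T}^{-1}$, whence $\Fa(s\circ t)=\mathrm{Id}=\Fa(\mathrm{Id}_U)$ and, by faithfulness, $s\circ t=\mathrm{Id}_U$, $t\circ s=\mathrm{Id}_{U'}$; so $t$ is an isomorphism. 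For finiteness I would compute, from the diagram, that $\mathrm{rank}\,\mathcal{U}=\dim U-2k\leq 2k$ (as $\r$ is onto, $\dim U\leq\dim E=4k$) and that $\deg\mathcal{U}=2k$ (since $\mathcal{E}\cong2k\ol(1)$ forces $\deg E^{0,1}=-2k$, and the bottom row is exact). Being globally generated, $\mathcal{U}$ splits by Grothendieck's theorem as $\bigoplus_i\ol(n_i)$ with all $n_i\geq0$; with bounded rank and fixed total degree there are only finitely many such bundles, hence, by the classification, only finitely many isomorphism classes of co-CR quaternionic vector spaces with $\dim E=4k$. The set is nonempty because any quaternionic vector space of real dimension $4k$, with $\r=\mathrm{Id}$, is co-CR quaternionic. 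The one genuinely substantive point is the identification of $\mathrm{ev}_J$ with the projection onto $U^{\C}/U^J$, which is what turns the abstract morphism $\mathcal{T}$ into the fibrewise inclusions required by Proposition \ref{prop:cr_q_lin_map}; the rest is bookkeeping with global generation and Grothendieck's splitting theorem.
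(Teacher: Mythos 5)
Your proof is correct, and its core is the same as the paper's: pass to global sections through the isomorphism $U^{\C}=H^0(Z,\mathcal{U})$, use the conjugation-intertwining hypothesis to see that the induced map on sections is the complexification of a (unique) real linear map $t$, and conclude via the dual of Proposition~\ref{prop:cr_q_lin_map} that $t$ is co-CR quaternionic. Your identification ${\rm ker}({\rm ev}_J)=\r\bigl(E^J\bigr)$ and the resulting inclusions $t^{\C}\bigl(U^J\bigr)\subseteq(U')^{T(J)}$ make explicit precisely the step the paper compresses into ``$t$ satisfies the dual of assertion (ii)'', and your global-generation check of functoriality and faithfulness fills in what the paper treats as obvious. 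The only genuinely different route is taken for the final assertion: the paper obtains finiteness and nonemptiness from Proposition~\ref{prop:co-cr_q_holo_bundle}, which pins down exactly which bundles occur ($\mathcal{U}=\bigoplus_{j=1}^{l+1}a_j\ol(j)$ with $\sum_j a_j=2k-l$, $\sum_j ja_j=2k$, and $a_j$ even for odd $j$) by computing $h_0\bigl(Z,\mathcal{U}\otimes\ol(-p)\bigr)$ and citing \cite{Qui-QJM98}; you observe only that $\mathcal{U}$ is globally generated, of degree $2k$ and rank at most $2k$, so that Grothendieck's theorem \cite{Gro-holo_bundles} leaves finitely many possibilities, and you get nonemptiness from quaternionic vector spaces ($\r={\rm Id}$) rather than from the converse part of Proposition~\ref{prop:co-cr_q_holo_bundle}. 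Your version is lighter and self-contained, but yields less information (it neither excludes trivial summands nor identifies the admissible bundles), information the paper needs anyway for Corollary~\ref{cor:classification_cr_q}. One caveat is common to both arguments: since the classification matches isomorphism classes of co-CR quaternionic vector spaces with isomorphism classes of pairs consisting of a bundle and a conjugation, finiteness of the set of possible underlying bundles yields finiteness of isomorphism classes only once one knows that each bundle admits at most finitely many such conjugations up to equivalence; this standard fact (implicit in the framework of \cite{Qui-QJM98}) is spelled out neither by you nor by the paper, so it is not a gap relative to the paper's own standard of rigor.
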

\begin{proof}
Let $(U,E,\r)$ and $(U',E',\r')$ be co-CR quaternionic vector spaces and let let $\mathcal{U}$ and $\mathcal{U}'$,
respectively, be their holomorphic vector bundles. Let $\mathcal{T}:\mathcal{U}\to\mathcal{U}'$ be a morphism of
holomorphic vector bundles which intertwines the conjugations. Then $\mathcal{T}$ induces, through the isomorphisms
$U^{\C}=H^0(Z,\mathcal{U})$\,, $U'^{\,\C}=H^0(Z',\mathcal{U}')$\,, a linear map $t:U\to U'$.
Furthermore, $t$ satisfies the dual of assertion (ii) of Proposition \ref{prop:cr_q_lin_map} (with respect to the identity map
of $\C\!P^1$). Thus, $t$ is co-CR quaternionic linear and, obviously, $\Fa(t)=\mathcal{T}$.
The last assertion follows from Proposition \ref{prop:co-cr_q_holo_bundle}\,, below.
\end{proof}

\begin{prop} \label{prop:co-cr_q_holo_bundle}
Let $(U,E,\r)$ be a co-CR quaternionic vector space and let $\mathcal{U}$ be its holomorphic vector bundle,
$\dim\!E=4k$, $\dim({\rm ker}\r)=l$.\\
\indent
Then $\mathcal{U}=\bigoplus_{j=1}^{l+1}a_j\ol(j)$\,, where $a_1,\ldots,a_{l+1}$ are nonnegative integers
satisfying the relations $\sum_{j=1}^{l+1}a_j=2k-l$ and $\sum_{j=1}^{l+1}ja_j=2k$; moreover, $a_j$ is even if\/ $j$ is odd.
Conversely, any holomorphic vector bundle of this form corresponds to a co-CR quaternionic vector space.
\end{prop}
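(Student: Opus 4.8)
The plan is to present $\mathcal{U}$ as the cokernel of an everywhere-injective map between two \emph{explicit} bundles over $\C\!P^1$ and then read off the splitting type from Kronecker's normal form for pencils of matrices; the parity statement will come from the conjugation, and the converse by running the construction backwards.

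First I would pin down rank and degree and produce the pencil. By the computation in the proof of Proposition~\ref{prop:cr_q_lin_map} we have $\mathcal{E}\cong2k\,\ol(1)$, so the subbundle $E^{0,1}\subseteq Z\times E^{\C}$ has rank $2k$ and degree $-2k$. Since $(\r,J)$ induces a linear co-CR structure for every $J\in Z$, assertion (iv) of Proposition~\ref{prop:induced_co-cr} holds fibrewise, i.e.\ $E^{0,1}\to Z\times U^{\C}$ is a subbundle inclusion; hence the second row $0\to E^{0,1}\to Z\times U^{\C}\to\mathcal{U}\to0$ of the displayed diagram exhibits $\mathcal{U}$ as a holomorphic vector bundle of rank $2k-l$ and degree $2k$. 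Next I would use the right-hand column $0\to\mathcal{K}\to\mathcal{E}\to\mathcal{U}\to0$, where $\mathcal{K}$ is the image of $Z\times(\ker\r)^{\C}=l\,\ol$. As $\mathcal{U}$ is locally free, $\mathcal{K}$ is a rank-$l$ subbundle of $\mathcal{E}=2k\,\ol(1)$ of degree $2k-2k=0$; being a quotient of the trivial bundle $l\,\ol$ it is globally generated, so all its summands have nonnegative degree, and degree $0$ forces $\mathcal{K}\cong l\,\ol$. Thus $\mathcal{U}=\mathrm{coker}\bigl(A\colon l\,\ol\to2k\,\ol(1)\bigr)$ with $A$ injective on every fibre, equivalently a pencil $A=xA_0+yA_1$ of constant $2k\times l$ matrices with $A([x:y])$ of rank $l$ for all $[x:y]\in\C\!P^1$. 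The triviality of $\mathcal{K}$ is the crux, and here is where I expect the real obstacle: the upper bound $j\le l+1$ does \emph{not} follow from $\mathcal{U}$ being a quotient of $2k\,\ol(1)$ (that yields only $j\ge1$), since an arbitrary subbundle of $2k\,\ol(-1)$ may have arbitrarily negative summands; it is precisely the triviality of $\mathcal{K}$ that rescues the bound.

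Now the heart of the matter: apply Kronecker's classification of matrix pencils under $GL_{2k}(\C)\times GL_l(\C)$, which here acts by \emph{all} bundle automorphisms of $2k\,\ol(1)$ and $l\,\ol$ and so leaves the cokernel unchanged. Fibrewise injectivity of $A$ excludes the regular (Jordan) blocks and the wide minimal-index blocks, leaving only tall minimal-index blocks $L_{\eta_i}^{\,T}$ of size $(\eta_i+1)\times\eta_i$, with $\eta_i\ge0$, $\sum_i\eta_i=l$ and $2k-l$ blocks in total. The cokernel of a single block is $\ol(\eta_i+1)$, by the tautological resolution $0\to\eta\,\ol\to(\eta+1)\,\ol(1)\to\ol(\eta+1)\to0$. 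Hence $\mathcal{U}=\bigoplus_i\ol(\eta_i+1)$; setting $a_j=\#\{i:\eta_i+1=j\}$ gives $\sum_j a_j=2k-l$ and $\sum_j j\,a_j=\sum_i(\eta_i+1)=2k$, while $0\le\eta_i\le\sum_i\eta_i=l$ yields exactly $1\le j\le l+1$.

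Finally, for the parity condition I would use the conjugation of $\mathcal{U}$ inherited from that of $U^{\C}$: it is an antilinear lift of the antipodal map of $Z$, and it is involutive because $\mathcal{U}$ is spanned by $H^0(\mathcal{U})=U^{\C}$, on which conjugation squares to the identity. Since the natural antilinear lift of the antipodal map on $\ol(j)$ squares to $(-1)^j$, a summand $\ol(j)$ with $j$ odd supports only a quaternionic (square $-1$) structure, so an involutive conjugation on $a_j\,\ol(j)$ can exist only when the copies pair up; thus $a_j$ is even for $j$ odd. For the converse I would reverse the construction: given $(\eta_i)$ satisfying the stated relations and the parity condition, form $A=\bigoplus_iL_{\eta_i}^{\,T}$, take $\mathcal{E}=2k\,\ol(1)$ with its standard conjugation (the twistor bundle of $\Hq^{\!k}$), equip $\mathcal{U}=\mathrm{coker}\,A$ with the induced conjugation -- involutive exactly because of the parity condition -- and recover $(U,E,\r)$ as the real global sections, invoking Theorem~\ref{thm:cr_co-cr_q_vector_spaces} to identify its holomorphic bundle with $\mathcal{U}$.
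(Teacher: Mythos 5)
Your proof is correct, but it follows a genuinely different route from the paper's. The paper works cohomologically: from $H^1(Z,\mathcal{U})=0$ and Birkhoff--Grothendieck it writes $\mathcal{U}=\bigoplus_{j\geq-1}a_j\ol(j)$, then computes $h_0\bigl(Z,\mathcal{U}\otimes\ol(-1)\bigr)=2k$ and $h_0\bigl(Z,\mathcal{U}\otimes\ol(-2)\bigr)=l$ by interpreting sections vanishing at prescribed points as the intersections $\bigcap_i\r\bigl(E^{J_i}\bigr)$, whose dimensions come from Propositions \ref{prop:induced_co-cr} and \ref{prop:lin_f-q}; these two numbers, together with rank and degree, force $a_{-1}=a_0=0$, yield both relations and the bound $j\leq l+1$. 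The parity of $a_j$ for odd $j$ is obtained by citing \cite[(10.7)]{Qui-QJM98}, and the converse by citing the explicit realizations of Example \ref{exm:co-cr_q_holo_bundle} (powers $\ol(2m)$, pairs $2\,\ol(2m+1)$, and direct sums). You instead present $\mathcal{U}$ as the cokernel of a fibrewise-injective pencil $l\,\ol\to 2k\,\ol(1)$ and invoke Kronecker's normal form, which delivers the splitting, the range $1\leq j\leq l+1$ and both numerical relations in one stroke; you then reprove the parity statement (rather than cite it) via the antipodal lift squaring to $(-1)^j$, and reverse the pencil construction for the converse. Your approach buys explicitness -- a normal form for $\r$ itself, not just for $\mathcal{U}$ -- and makes the converse a construction rather than an appeal to examples; the paper's approach is shorter given the machinery it has already set up, and its $h^0$-computations reuse the linear algebra of Section \ref{section:lin_f-q}. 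Two small economies and caveats: the triviality of the kernel $\mathcal{K}$, which you rightly identify as the crux of the upper bound, needs no degree argument -- it is the isomorphic image of $Z\times\ker\r^{\C}$ under a morphism that is fibrewise injective precisely by the co-CR condition (this is what the paper's diagram asserts); and your parity argument tacitly requires the conjugation to respect the degree filtration of $\mathcal{U}$ (it does, since an antiholomorphic bundle automorphism preserves degrees of subsheaves, so it acts on each graded piece $a_j\ol(j)$), while your converse requires exhibiting the pencil so that its image is conjugation-invariant, which is where the pairing of odd blocks enters -- both points are standard but should be spelled out, at the same level of detail as the paper's citation of Quillen.
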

\begin{proof}
As $H^1(Z,\mathcal{U})=0$\,, from the theorem of Birkhoff and Grothendieck (see \cite{Gro-holo_bundles}\,) it follows that
$\mathcal{U}=\bigoplus_{j=-1}^ra_j\ol(j)$ for some nonnegative integers $a_{-1},\,a_0,\ldots,a_r$\,.
Let $J_1,\ldots,J_p\in Z$ be distinct, $(p\geq1)$\,. Under the isomorphism \mbox{$U^{\C}=H^0(Z,\mathcal{U})$}
the complex vector space $\bigcap_{j=1}^p\r\bigl(E^{J_p}\bigr)$ corresponds to the space
of holomorphic sections of\/ $\mathcal{U}$ which are zero at $J_1,\ldots,J_p$\,. Consequently, the complex dimension
of $\bigcap_{j=1}^p\r\bigl(E^{J_p}\bigr)$ is equal to $h_0\bigl(Z,\mathcal{U}\otimes\ol(-p)\bigr)$\,,
where $h_0=\dim_{\C}\!\!H^0$. Hence,
\begin{equation} \label{e:h0}
h_0\bigl(Z,\mathcal{U}\otimes\ol(-1)\bigr)=\sum_{j=1}^rja_j=2k\,\quad , \quad
h_0\bigl(Z,\mathcal{U}\otimes\ol(-2)\bigr)=\sum_{j=2}^r(j-1)a_j=l\,.
\end{equation}
The second relation of \eqref{e:h0} implies that $r-1\leq l$. By combining the first relation of \eqref{e:h0}
with the fact that the degree of\/ $\mathcal{U}$
is $\sum_{j=-1}^rja_j=2k$ we obtain $a_{-1}=0$\,. Now, from \eqref{e:h0} and the fact that the complex rank of\/ $\mathcal{U}$
is $\sum_{j=0}^ra_j=2k-l$ we obtain $a_0=0$\,. The fact that $a_j$ is even if $j$ is odd follows from \cite[(10.7)]{Qui-QJM98}\,.\\
\indent
The last statement is an immediate consequence of Example \ref{exm:co-cr_q_holo_bundle}\,, below.
\end{proof}

\indent
The duals of Theorem \ref{thm:cr_co-cr_q_vector_spaces} and Proposition \ref{prop:co-cr_q_holo_bundle}
can be easily formulated.\\
\indent
The definition of (co-)CR vector subspace can be easily extended to give the
corresponding notion of (co-)CR quaternionic vector subspace.\\
\indent
Now, we give {\it examples of holomorphic vector bundles of (co-)CR quaternionic vector spaces}.

\begin{exm} \label{exm:co-cr_q_holo_bundle}
1) The holomorphic vector bundles of the CR quaternionic vector spaces $(\Hq,\Hq)$ and $({\rm Im}\Hq,\Hq)$ are
$2\ol(-1)$ and $\ol(-2)$\,, respectively.\\
\indent
2) Let $(U,E,\r)$ be a co-CR quaternionic vector space whose holomorphic vector bundle $\mathcal{U}$
has complex rank $1$\,; equivalently, $2k-l=1$\,, where $\dim E=4k$\,. Then\/ $\mathcal{U}=\ol(2k)$\,.
Thus, the holomorphic vector bundle of the CR quaternionic vector space $(U_{k-1},\Hq^{\!k})$
of Example \ref{exm:U_k}(1) is $\ol(-2k)$\,. Moreover, from Theorem \ref{thm:cr_co-cr_q_vector_spaces} and \cite[(10.7)]{Qui-QJM98}\,,
we obtain that $(U,E,\r)$ is isomorphic, as a co-CR quaternionic vector space, to the dual of $(U_{k-1},\Hq^{\!k})$\,.\\
\indent
3) It is straightforward to show that the holomorphic vector bundle of the CR quaternionic vector space $(U'_k,\Hq^{\!2k+1})$
of Example \ref{exm:U_k}(2) is $2\ol(-2k-1)$\,, $(k\geq0)$\,.\\
\indent
4) Let $U_1$ and $U_2$ be co-CR quaternionic vector spaces, and let $Z_1$ and $Z_2$ be the spaces
of admissible linear complex structures of the corresponding quaternionic vector spaces, respectively.
Then any orientation preserving isometry $Z_1=Z_2$ determines a unique linear co-CR quaternionic structure
on $U_1\oplus U_2$ which restricts to the given linear co-CR quaternionic structures on $U_1$ and $U_2$\,.
The resulting co-CR quaternionic vector space is called \emph{the direct sum (product)} of the co-CR quaternionic
vector spaces $U_1$ and $U_2$\,, with respect to the isometry $Z_1=Z_2$\,.
Let\/ $\mathcal{U}_1$ and\/ $\mathcal{U}_2$ be the holomorphic vector bundles of $U_1$ and $U_2$\,, respectively.
Then\/ $\mathcal{U}_1\oplus\mathcal{U}_2$ is the holomorphic vector bundle of $U_1\oplus U_2$\,.
\end{exm}

\indent
In the following classification result, the notations are as in Example \ref{exm:U_k}\,,

\begin{cor} \label{cor:classification_cr_q}
Let $U$ be a CR quaternionic vector space. Then there exists a CR quaternionic linear isomorphism between $U$
and a direct sum in which each term is of the form $U_k$ or\/ $U'_k$ for some natural number $k$\,;
moreover, the terms are unique, up to order.\\
\indent
Furthermore, let $W_n$ be the CR quaternionic vector subspace of\/ $U$ corresponding to the direct sum of all terms up to
$U_{(n/2)-1}$ or\/ $U'_{(n-1)/2}$\,, according to $n$ even or odd, respectively, $(n>1)$\,.
Then the filtration $\{W_n\}_{n>1}$ is canonical (that is, it doesn't depend on the isomorphism).
\end{cor}
\begin{proof}
This is an immediate consequence of Theorem \ref{thm:cr_co-cr_q_vector_spaces}\,, Proposition \ref{prop:co-cr_q_holo_bundle}\,,
Example \ref{exm:co-cr_q_holo_bundle} and \cite[Proposition 2.4]{Gro-holo_bundles}\,.
\end{proof}

\indent
The dual of Corollary \ref{cor:classification_cr_q} can be easily formulated.

\begin{prop} \label{prop:cr_q_are_generic}
Let $k$ and $l$ be positive integers, $l<2k$\,. The set of CR quaternionic vector subspaces $U$ of\/ $\Hq^{\!k}$
of codimension $l$ is a nonempty open set of the Grassmannian of real vector subspaces
of codimension $l$ of $\Hq^{\!k}$.
\end{prop}
\begin{proof}
Let $a,b\in\mathbb{N}$ with $b<2k-l$ and such that $2k=(2k-l-b)a+b(a+1)$\,. Obviously,
if $a$ is even then $b$ is even, whilst if $a$ is odd then $2k-l-b$ is even.\\
\indent
By Proposition \ref{prop:co-cr_q_holo_bundle}\,, there exists a CR quaternionic vector subspace $U$ of $\Hq^{\!k}$
of codimension $l$ whose holomorphic vector bundle is\/ $\mathcal{U}=(2k-l-b)\ol(a)\oplus\,b\,\ol(a+1)$\,.\\
\indent
{}From Theorem \ref{thm:cr_co-cr_q_vector_spaces} we obtain that the closed subgroup $G$ of ${\rm GL}(k,\Hq)$
which preserves $U$ is isomorphic to an open set of the space of real sections of ${\rm End}(\mathcal{U})$\,.
But ${\rm End}(\mathcal{U})=\bigl((2k-l-b)^2+b^2\bigr)\ol\oplus b(2k-l-b)\ol(1)\oplus b(2k-l-b)\ol(-1)$
and, hence, $\dim G=(2k-l)^2$.\\
\indent
Therefore the orbit of $U$ under ${\rm GL}(k,\Hq)$ in the Grassmannian of real vector subspaces
of codimension $l$ of $\Hq^{\!k}$ has dimension $4k^2-(2k-l)^2=l(4k-l)$\,.
\end{proof}

\indent
It can be shown that the set of CR quaternionic vector subspaces $U$ of $\Hq^{\!k}$ of codimension $l$\,,
$(l<2k)$\,, is a nonempty open set of the Grassmannian of real vector subspaces of codimension $l$ of $\Hq^{\!k}$,
with respect to the Zariski topology (induced by the Pl\"ucker embedding). Furthermore, by duality, we obtain the
corresponding fact for co-CR quaternionic vector spaces.

\begin{rem}
The co-CR quaternionic and CR quaternionic vector spaces define augmented and strengthen $\Hq$-modules, in the sense of
\cite{Joy-QJM98} and \cite{Qui-QJM98}\,, respectively. However, the associated holomorphic vector bundles introduced by us are different
from the sheaves introduced in \cite{Qui-QJM98}\,.
\end{rem}

\section{CR quaternionic manifolds and their CR twistor spaces} \label{section:cr_q}

A (smooth) \emph{bundle of associative algebras} is a vector bundle whose typical fibre
is a (finite-dimensional) associative algebra and whose structural group is the group of automorphisms
of the typical fibre. Let $A$ and $B$ be bundles of associative algebras. A morphism of vector bundles $\r:A\to B$ is called
a \emph{morphism of bundles of associative algebras} if $\r$ restricted to each fibre is a morphism of
associative algebras.\\
\indent
Recall that a \emph{quaternionic vector bundle} over a manifold $M$ is a real vector bundle $E$ over $M$ endowed
with a pair $(A,\r)$ where $A$ is a bundle of associative algebras, over $M$, with typical fibre
$\Hq$ and $\r:A\to{\rm End}(E)$ is a morphism of bundles of associative algebras; we say that $(A,\r)$
is a \emph{linear quaternionic structure on $E$} (see~\cite{Bon}\,). Standard arguments (see \cite{IMOP}\,)
apply to show that a quaternionic vector bundle of (real) rank $4k$ is just
a (real) vector bundle endowed with a reduction of its structural group to ${\rm Sp}(1)\cdot{\rm GL}(k,\Hq)$\,.\\
\indent
Note that, a manifold is \emph{almost quaternionic} if and only if its tangent bundle is endowed with a linear quaternionic structure
(see \cite{Sal-dg_qm}\,, \cite{IMOP}\,).\\
\indent
Let $(A,\r)$ be a linear quaternionic structure on the vector bundle $E$. We denote $Q=\r({\rm Im}A)$ and by $Z$ the sphere bundle of $Q$.\\
\indent
Recall, also, that an \emph{almost CR structure} on a manifold $M$ is a complex vector subbundle $\Cal$ of $T^{\C}\!M$
such that $\Cal\cap\overline{\Cal}=\{0\}$\,. An \emph{(integrable almost) CR structure}
is an almost CR structure whose space of sections is closed under the usual bracket.

\begin{defn} \label{defn:almost_cr_q}
Let $E$ be a quaternionic vector bundle on a manifold $M$ and let $\iota:TM\to E$ be an injective
morphism of vector bundles. We say that $(E,\iota)$ is an \emph{almost CR quaternionic structure} on $M$ if $(E_x,\iota_x)$ is a linear
CR quaternionic structure on $T_xM$, for any $x\in M$.\\
\indent
An \emph{almost CR quaternionic manifold} is a manifold endowed with an almost CR quaternionic structure.
\end{defn}

\indent
If $\iota$ is an isomorphism then the definition gives the notion (see \cite{IMOP}\,) of almost quaternionic manifold.

\begin{exm} \label{exm:almost_cr_q}
1) Let $(M,c)$ be a three-dimensional conformal manifold and let $L=\bigl(\Lambda^3TM\bigr)^{1/3}$ be the line bundle of $M$.
Then, $E=L\oplus TM$ is an oriented vector bundle of rank four endowed
with a (linear) conformal structure such that $L=(TM)^{\perp}$. Therefore $E$ is a quaternionic vector bundle
and $(M,E,\iota)$ is an almost CR quaternionic manifold, where $\iota:TM\to E$ is the inclusion.
Moreover, any almost CR quaternionic structure on a three-dimensional manifold $M$ is obtained this way from a
conformal structure on $M$.\\
\indent
2) Let $M$ be a hypersurface in an almost quaternionic manifold $N$. Then $(TN|_M,\iota)$ is an
almost CR quaternionic structure on $M$, where $\iota:TM\to TN|_M$ is the inclusion.\\
\indent
3) More generally, let $N$ be an almost quaternionic manifold. Let $M$ be a submanifold
of $N$ such that, at some point $x\in M$, we have that $(T_xM,E_x,\iota_x)$ is a CR quaternionic vector space,
where $\iota$ is the inclusion $TM\to TN|_M$. Then, Proposition \ref{prop:cr_q_are_generic} implies
that by passing, if necessary, to an open neighbourhood of $x$ in $M$ we have that $(TN|_M,\iota)$
is an almost CR quaternionic structure on $M$.
\end{exm}

\begin{defn} \label{defn:cr_q_connection}
Let $(M,E,\iota)$ be an almost CR quaternionic manifold.  An \emph{almost quaternionic connection} $\nabla$ on $(M,E,\iota)$
is a connection on $E$ which preserves $Q$\,; if, further, $\nabla$ is torsion-free (i.e., $\dif^{\nabla}\!\iota=0$\,)
then it is a \emph{quaternionic connection}.
\end{defn}

\indent
It is well known that if $M$ is an almost quaternionic manifold endowed with an almost quaternionic connection $\nabla$
then the sphere bundle $Z$ of $M$ becomes equipped by an almost complex structure $\mathcal I^\nabla$,
which is uniquely defined and integrable if one uses quaternionic connections;
$(Z,\mathcal I^\nabla)$ is \emph{the twistor space} of $M$, \cite{Sal-dg_qm}.\\
\indent
For any almost CR quaternionic manifold $(M,E,\iota)$\,, endowed with an almost quaternionic connection $\nabla$,
we introduce a natural almost CR structure on the total space of the bundle $\p:Z\to M$ of admissible linear complex structures on $E$.\\
\indent
For any $J\in Z$, let $\mathcal{B}_J\subseteq T^{\C}_J\!Z$ be the horizontal lift, with respect to $\nabla$,
of $\iota^{-1}\bigl(E^J\bigr)$, where $E^J\subseteq E^{\C}_{\p(J)}$ is the eigenspace of $J$ corresponding
to $-{\rm i}$\,. Then, by defining $\Cal_J=\mathcal{B}_J\oplus({\rm ker}\dif\!\p)^{0,1}_J$, $(J\in Z)$\,,
we obtain an almost CR structure $\Cal$ on $Z$.\\
\indent
The following definition is motivated by \cite[Remark 2.10(2)\,]{IMOP}\,.

\begin{defn} \label{defn:cr_q}
An \emph{(integrable almost) CR quaternionic structure} on $M$ is a triple $(E,\iota,\nabla)$\,,
where $(E,\iota)$ is an almost CR quaternionic structure on $M$ and $\nabla$ is an almost quaternionic connection
of $(M,E,\iota)$ such that the almost CR structure $\Cal$ is integrable.\\
\indent
A \emph{CR quaternionic manifold} is a manifold endowed with a CR quaternionic structure.\\
\indent
If $(M,E,\iota,\nabla)$ is a CR quaternionic manifold then $(Z,\Cal)$ is its \emph{twistor space}.
\end{defn}

\indent
{}From \cite[Remark 2.10]{IMOP} it follows that a CR quaternionic structure $(E,\iota,\nabla)$ for which $\iota$ is an isomorphism
is a quaternionic structure.\\
\indent
With the same notations as in Definition \ref{defn:cr_q}\,, the quadruple $(Z,M,\p,\Cal)$ is an almost twistorial structure which
we call \emph{the almost twistorial structure of $(M,E,\iota,\nabla)$}
(see \cite{LouPan-II} for the definition of almost twistorial structures).

\begin{prop} \label{prop:cr_q}
Let $M$ be endowed with an almost CR quaternionic structure $(E,\iota)$ and let $\nabla$ be an
almost quaternionic connection of $(M,E,\iota)$\,; denote $T^J\!M=\iota^{-1}\bigl(E^J\bigr)$\,, for any $J\in Z$\,.\\
\indent
The following assertions are equivalent:\\
\indent
\quad{\rm (i)} The almost twistorial structure of $(M,E,\iota,\nabla)$ is integrable.\\
\indent
\quad{\rm (ii)} $T\bigl(\Lambda^2(T^J\!M)\bigl)\subseteq E^J$ and
$R\bigl(\Lambda^2(T^J\!M)\bigr)\bigl(E^J\bigr)\subseteq E^J$,
for any $J\in Z$, where $T$ and $R$ are the torsion and curvature forms, respectively, of\/ $\nabla$.
\end{prop}
\begin{proof}
This is an immediate consequence of Theorem \ref{thm:int_res}\,, below.
\end{proof}

\begin{thm} \label{thm:cr_q}
Let $M$ be endowed with an almost CR quaternionic structure $(E,\iota)$\,, $\rank E=4k$, $\dim M=4k-l$\,, $(0\leq l\leq 2k-1)$\,.
Let $\nabla$ be a quaternionic connection on $(M,E,\iota)$\,. If $2k-l\neq2$ then the almost twistorial structure of $(M,E,\iota,\nabla)$ is integrable.
\end{thm}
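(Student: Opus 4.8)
The plan is to verify directly the curvature condition furnished by Corollary \ref{cor:cr_q}, turning the torsion-freeness of a quaternionic connection into a first Bianchi identity. Since $\nabla$ is quaternionic, its torsion $T=\dif^{\nabla}\!\iota$ vanishes, so the first half of Corollary \ref{cor:cr_q}(ii) is automatic and it remains only to show that $R\bigl(\Lambda^2(T^J\!M)\bigr)\bigl(E^J\bigr)\subseteq E^J$ for every $J\in Z$. Because $\nabla$ preserves $Q$, its curvature $R(X,Y)$, viewed as an endomorphism of $E$, normalizes $Q$ and therefore lies in $Q\oplus Q'$, where $Q'$ is the commutant of $Q$ in ${\rm End}(E)$ (isomorphic to $\mathfrak{gl}(k,\Hq)$). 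As every element of $Q'$ commutes with each $J$ and hence preserves $E^J$, I would reduce the curvature condition to a statement about the $Q$-component $R^Q$ of $R$ alone.

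Next I would fix $J\in Z$, complete it to a positive orthonormal frame, and use the resulting decomposition $Q^{\C}=\C J\oplus\C N_{+}\oplus\C N_{-}$ into the diagonal and the two nilpotents relative to $E^{\C}=E^J\oplus\overline{E^J}$, where $N_{+}$ annihilates $E^J$ while $N_{-}$ restricts to an isomorphism $E^J\to\overline{E^J}$. An element of $Q^{\C}$ preserves $E^J$ precisely when its $N_{-}$-component vanishes, so writing $R^Q=c_0\,J+c_{+}N_{+}+c_{-}N_{-}$ with scalar-valued $2$-forms $c_0,c_{\pm}$, the curvature condition becomes $c_{-}|_{\Lambda^2(T^J\!M)}=0$. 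By Proposition \ref{prop:induced_cr}, the space $T^J\!M=\iota^{-1}\bigl(E^J\bigr)$ has complex dimension $2k-l$; this is the point where the integer $2k-l$ enters.

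The key step is the Bianchi identity: from $\dif^{\nabla}\!\iota=0$ one obtains $R(X,Y)\iota(Z)+R(Y,Z)\iota(X)+R(Z,X)\iota(Y)=0$ for all $X,Y,Z$. Restricting to $X,Y,Z\in T^J\!M$, so that $\iota(X),\iota(Y),\iota(Z)\in E^J$, the only contribution to the $\overline{E^J}$-component comes from the $N_{-}$-term, and projecting onto $\overline{E^J}$ gives
\[
c_{-}(X,Y)\,N_{-}\iota(Z)+c_{-}(Y,Z)\,N_{-}\iota(X)+c_{-}(Z,X)\,N_{-}\iota(Y)=0 .
\]
Since $N_{-}\circ\iota$ is injective on $T^J\!M$, this says that the alternating form $\omega=c_{-}$ on $W=T^J\!M$ satisfies $\omega(X,Y)Z+\omega(Y,Z)X+\omega(Z,X)Y=0$. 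An elementary argument then concludes: if $\omega\neq0$, choosing $X,Y$ with $\omega(X,Y)\neq0$ expresses every $Z\in W$ as a combination of $X$ and $Y$, forcing $\dim_{\C}W=2$. As $\dim_{\C}W=2k-l\neq2$ by hypothesis, $\omega=c_{-}=0$ on $\Lambda^2(T^J\!M)$, so the curvature condition holds for every $J$ and the almost twistorial structure is integrable.

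The main obstacle — and the genuine content — is this last algebraic lemma, together with the bookkeeping that isolates exactly the $\overline{E^J}$-component of the Bianchi identity; the hypothesis $2k-l\neq2$ is precisely what excludes the exceptional two-dimensional case (which, for $l=0$, $k=1$, is four-dimensional conformal geometry, where anti-self-duality is a real extra condition). One must also check carefully that $R$ normalizes $Q$ and that $N_{-}$ maps $E^J$ isomorphically onto $\overline{E^J}$, but both are routine once the frame is fixed.
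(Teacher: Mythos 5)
Your proof is correct, and its engine is the same as the paper's: reduce via Corollary \ref{cor:cr_q} to the curvature condition (the torsion condition being vacuous for a quaternionic connection), then use the Bianchi identity $R\wedge\iota=0$ and the hypothesis $\dim_{\C}T^J\!M=2k-l\neq2$ to force the offending curvature component to vanish. Where you differ is in the bookkeeping, and your version has genuine advantages. The paper splits into the cases $2k-l=1$ (trivial) and $2k-l\geq3$, and for the latter invokes the \emph{local} decomposition $E^{\C}=E'\otimes_{\C}E''$ with $\nabla^{\C}=\nabla'\otimes\nabla''$, restating the curvature condition as: $R'(X,Y)$ preserves the line $[u]\subseteq E'_x$ whenever $\iota(X),\iota(Y)\in u\otimes E''_x$. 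Your decomposition $R=R^Q+R^{Q'}$ (with $Q'$ the commutant), followed by the root-space splitting $Q^{\C}=\C J\oplus\C N_+\oplus\C N_-$, is exactly the global, real-form counterpart of this: under the paper's identification, $Q^{\C}$ corresponds to $\mathfrak{sl}(E')\otimes{\rm Id}_{E''}$, $(Q')^{\C}$ to ${\rm Id}_{E'}\otimes{\rm End}(E'')$, and your $c_-$ is the component obstructing $R'$ from preserving $[u]$. What you gain: no appeal to the merely local existence of $E'$ and $E''$ (no lifting of the structure group), a uniform treatment of $2k-l=1$ and $2k-l\geq3$ through a single elementary lemma (an alternating $2$-form satisfying $\omega(X,Y)Z+\omega(Y,Z)X+\omega(Z,X)Y=0$ vanishes unless the underlying space is two-dimensional), and an explicit rendering of the final step that the paper compresses into ``follows quickly from the Bianchi identity''. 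The two facts you deferred as routine are indeed routine and true: the normalizer of $Q$ in ${\rm End}(E)$ equals $Q\oplus Q'$, so the curvature of a $Q$-preserving connection takes values there; and, for a fixed positive orthonormal frame completing $J$, one nilpotent kills $E^J$ while the other restricts to an isomorphism $E^J\to\overline{E^J}$, so that preservation of $E^J$ is equivalent to the vanishing of the single coefficient $c_-$.
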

\begin{proof}
If $2k-l=1$ then, as $T^J\!M$ is one-dimensional, for any $J\in Z$, the proof is an immediate
consequence of Proposition \ref{prop:cr_q}\,. Assume $2k-l\geq3$ and note that, as the complexification of the structural group of $E$ is
${\rm SL}(2,\C\!)\cdot{\rm GL}(2k,\C\!)$\,, we have that, locally, $E^{\C}=E'\otimes_{\C\!}E''$
where $E'$ and $E''$ are complex vector bundles of ranks $2$ and $2k$, respectively. Moreover, we have
$\nabla^{\C}=\nabla'\otimes\nabla''$, where $\nabla'$ and $\nabla''$ are connections on $E'$ and $E''$,
respectively. Also, $Z=P(E')$ such that if $J\in Z_x$ corresponds to the line $[u]\in P(E'_x)$
then the eigenspace of $J$ corresponding to $-{\rm i}$ is equal to $\{u\otimes v\,|\,v\in E''_x\,\}$\,,
$(x\in M)$\,. Then assertion (ii) of Proposition \ref{prop:cr_q} holds if and only if
$R'(X,Y)(u)\in[u]$\,, for any $u\in E'_x\setminus\{0\}$ and $X,Y\in T_xM$, such that
$\iota(X), \iota(Y)\in\{u\otimes v\,|\,v\in E''_x\,\}$\,, $(x\in M)$\,, where $R'$
is the curvature form of $\nabla'$. The proof now follows quickly from the Bianchi identity $R\wedge\iota=0$\,,
by using that $\dim\bigl(\iota^{-1}\bigl(\{u\otimes v\,|\,v\in E''_x\,\}\bigr)\bigr)\geq3$, $(x\in M)$\,.
\end{proof}

\begin{exm} \label{exm:cr_q}
1) Let $(M,c,D)$ be a three-dimensional Weyl space; that is, $(M,c)$ is a conformal manifold and $D$ is a torsion free
conformal connection on it. With the same notations as in Example \ref{exm:almost_cr_q}(1)\,, let $D^L$ be the connection
induced by $D$ on $L$\,. Then $\nabla=D^L\oplus D$ is a quaternionic connection on $(M,E,\iota)$\,; in particular,
$(M,E,\iota,\nabla)$ is a CR quaternionic manifold.\\
\indent
2) If in Examples \ref{exm:almost_cr_q}(2) and \ref{exm:almost_cr_q}(3) we assume $N$ quaternionic then we obtain examples
of CR quaternionic manifolds.
\end{exm}

\section{Quaternionic manifolds as heaven spaces} \label{section:q_heaven_spaces}

\indent
We recall the following definition (cf.\ \cite{AndFre}\,, \cite{Ro-LeB_nonrealiz}\,).

\begin{defn} \label{defn:realiz_cr}
Let $(M,\Cal)$ be a CR manifold, $\dim M=2k-l$\,, $\rank\Cal=k-l$\,.\\
\indent
We say that $(M,\Cal)$ is  \emph{realizable} if $M$ is an embedded submanifold, of codimension $l$\,, of a complex manifold $N$
such that $\Cal=T^{\C}\!M\cap(T^{0,1}N)|_M$.\\
\indent
Suppose, further, that $(M,\Cal)$ is endowed with a conjugation $\t$\,; that is, $\t$ is an involutive CR diffeomorphism from
$(M,\Cal)$ onto $(M,\overline{\Cal})$\,. We say that $(M,\Cal,\t)$ is \emph{realizable} if $(M,\Cal)$ is realizable and
$\t$ is the restriction of a conjugation on the corresponding complex manifold.
\end{defn}

\indent
We have the following straight extension of the notion of realizability.

\begin{defn} \label{defn:realiz_cr_q}
Let $(M,E,\iota,\nabla)$ be a CR quaternionic manifold and let $(Z,\Cal)$ be its twistor space. We say that $(M,E,\iota,\nabla)$ is \emph{realizable} if $M$ is an embedded submanifold of a quaternionic manifold $N$
such that $E=TN|_M$, as quaternionic vector bundles, and $\Cal=T^{\C}\!Z\cap(T^{0,1}Z_N)|_M$, where $Z_N$ is the twistor space of $N$.
\end{defn}

\begin{thm} \label{thm:realiz_cr_q}
Let $(M,E,\iota,\nabla)$ be a CR quaternionic manifold and let $(Z,\Cal,\t)$ be its twistor space, endowed with the
conjugation given, on the fibres of $Z$, by the antipodal map. Then the following assertions are equivalent:\\
\indent
{\rm (i)} $(M,E,\iota,\nabla)$ is realizable.\\
\indent
{\rm (ii)} $(Z,\Cal,\t)$ is realizable.
\end{thm}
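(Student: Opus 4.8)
The plan rests on the observation that the fibres $Z_x=\p^{-1}(x)$ of $\p\colon Z\to M$ are, by the very definition of $\Cal$, embedded rational curves in $(Z,\Cal)$: each $Z_x\cong\C\!P^1$ and the CR structure induced on it is the full complex structure $(\ker\dif\!\p)^{0,1}$. Since the twistor correspondence recovers a quaternionic manifold $N$ from its complex twistor space $Z_N$ as the real Kodaira family of rational curves with normal bundle $2k\,\ol(1)$, the entire content of the theorem is to manufacture such a $Z_N\supseteq Z$ from the local realizations of $\Cal$ and to recognise the $Z_x$ as members of that family. The implication (i)$\Rightarrow$(ii) requires only unwinding Definition~\ref{defn:realiz_cr_q}: if $M$ embeds in a quaternionic manifold $N$ with $E=TN|_M$, then, writing $p\colon Z_N\to N$ for the twistor projection, $Z=p^{-1}(M)$ is a generic CR submanifold of the complex manifold $Z_N$ and $\Cal$ is exactly the CR structure it inherits; hence $(Z,\Cal)$ is globally, and a fortiori locally, realizable.

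For (ii)$\Rightarrow$(i) I would first work along a single twistor line $\ell=Z_{x_0}$. A finite cover of $\ell$ by open subsets of $Z$ carrying local realizations gives CR embeddings into complex manifolds of dimension $2k+1$; because $\dim_{\R}Z=2(2k+1)-l$ and $\rank_{\C}\Cal=(2k+1)-l$, the manifold $Z$ is \emph{generic} in each realization, so the germ of the ambient complex structure along $Z$ is determined, up to biholomorphism, by $\Cal$ alone. Matching the local realizations on overlaps by the resulting biholomorphisms---and using $H^1\!\bigl(\ell,2k\,\ol(1)\bigr)=0$ to annihilate the obstruction to gluing across the compact curve---produces a complex manifold $Z_N$ into which a neighbourhood of $\ell$ in $Z$ is CR-embedded and in which $\ell$ is a holomorphic rational curve. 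Its normal bundle in $Z_N$ I would identify with the bundle $\mathcal{E}_{x_0}\cong 2k\,\ol(1)$ of Section~\ref{section:cr_q_co-cr_q_lin_map} (the holomorphic vector bundle $\mathcal{U}_{x_0}$ of the CR quaternionic vector space $T_{x_0}M$ recording the CR-normal data of $Z_{x_0}$ inside $Z$). The vanishing of $H^1$ then makes Kodaira's theorem applicable, so these curves form a complete family parameterized by a complex manifold of dimension $h^0\!\bigl(\ell,2k\,\ol(1)\bigr)=4k$. The fibrewise antipodal involution of $Z$---the global avatar of the conjugations of Theorem~\ref{thm:cr_co-cr_q_vector_spaces}---extends to a fixed-point-free antiholomorphic involution $\tau$ of $Z_N$, and the real slice $N$ cut out by the $\tau$-invariant curves is a real $4k$-manifold. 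The data so obtained (rational curves with normal bundle $2k\,\ol(1)$, together with $\tau$) is precisely that of a quaternionic twistor space, so $N$ is quaternionic with twistor space $Z_N$; the $\tau$-invariant curves contained in $Z$ are exactly the $Z_x$, $x\in M$, whence $x\mapsto[Z_x]$ embeds $M$ in $N$ with $T_xN$ equal to the real sections of $\mathcal{E}_x$, that is $TN|_M=E$, and $\Cal$ is recovered as the induced CR structure. Germ-uniqueness of $N$ is inherited from the uniqueness in the twistor correspondence, so $N$ is the heaven space of $M$.

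The main obstacle is the construction of the ambient complex manifold $Z_N$: the several local realizations are a priori unrelated, and no single one can contain the compact line $\ell$. What must be pinned down is that $Z$ is CR-generic in each realization and that, for a realizable generic CR submanifold, the ambient holomorphic germ---and hence the transition maps on overlaps---is forced by the intrinsic CR structure (cf.\ \cite{AndFre}\,); together with $H^1\!\bigl(\ell,2k\,\ol(1)\bigr)=0$, this is what allows the local pictures to be assembled coherently around each twistor line and then over all of $M$. In the real-analytic category this step is free, since $(Z,\Cal)$ then carries a canonical complexification in which the $Z_x$ are already holomorphic curves; this is precisely why every real-analytic CR quaternionic manifold admits a heaven space.
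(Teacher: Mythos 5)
Your direction (i)$\Rightarrow$(ii) and the overall architecture of (ii)$\Rightarrow$(i) (embed $Z$ in an ambient complex manifold $Z_1$, recognise the fibres of $\p$ as rational curves with normal bundle $2k\,\ol(1)$, apply Kodaira's theorem, extend the fibrewise antipodal map to a real structure and take its fixed-point set to get the heaven space $N$) do follow the paper. But the step on which everything hinges contains a genuine gap: you assert that, because $Z$ is generic in each local realization, ``the germ of the ambient complex structure along $Z$ is determined, up to biholomorphism, by $\Cal$ alone,'' and that this forces the transition maps needed to glue the local realizations. In the smooth category this is false in general: local realizability of a generic CR manifold does not imply that CR functions (let alone CR diffeomorphisms between two realizations) extend holomorphically to \emph{two-sided} ambient neighbourhoods, and without such extension the overlapping local ambient germs cannot be matched. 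This is a Levi-form issue, not a genericity issue, and it is precisely the point where the paper invokes Proposition \ref{prop:full}: in every conormal direction $\a$ there exist $J$ and $u,v\in U\cap J(U)$ with $\a(u)\a(v)<0$, i.e.\ the relevant Levi forms have eigenvalues of both signs. Combining this (via an argument as in the proof of Theorem \ref{thm:int_res}) with the extension criterion of \cite[Theorem 3.7]{Ro-LeB_nonrealiz}, the paper first proves that every local CR function on $(Z,\Cal)$ extends locally to a holomorphic function, and only then adapts \cite[Theorem 1.12]{AndFre} to produce the global closed embedding $Z\hookrightarrow Z_1$ with $\Cal=T^{\C}Z\cap(T^{0,1}Z_1)|_Z$. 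Your proposal never uses Proposition \ref{prop:full}, so the gluing step is unsupported.

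Relatedly, your appeal to $H^1\bigl(\ell,2k\,\ol(1)\bigr)=0$ ``to annihilate the obstruction to gluing across the compact curve'' is misplaced: that vanishing governs the Kodaira deformation theory of the curve $\ell$ (completeness and smoothness of the family of rational curves, dimension $h^0=4k$ of its parameter space), not the compatibility of ambient complex structures coming from distinct local realizations of the CR structure along $Z$; the latter obstruction lives in the extendability of CR maps, which is exactly what the Levi-form sign condition buys. Your closing remark that in the real-analytic category the step is free is correct (and is how Corollary \ref{cor:realiz_cr_q_1} follows), but the theorem as stated concerns an arbitrary locally realizable $(Z,\Cal)$, and for that the missing ingredient is essential. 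The remaining items you outline (extension of the antipodal map to an anti-holomorphic involution after shrinking $Z_1$, the induced real structure on the parameter space, identification of $TN|_M$ with $E$, and germ-uniqueness) are consistent with the paper, which additionally checks that $M\hookrightarrow N$ is a topological embedding via the quotient topologies of $Z\to M$ and $Z_1\to N$.
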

\begin{proof}
As the implication (i)$\Longrightarrow$(ii) is trivial, it is sufficient to prove (ii)$\Longrightarrow$(i)\,.\\
\indent
If (ii) holds then $(Z,\Cal)$ is an embedded submanifold, of codimension $l$\,, of a complex manifold $Z_1$ such that
$\Cal=T^{\C}\!Z\cap(T^{0,1}Z_1)|_Z$, where $\dim M=4k-l$\,, $\rank E=4k$\,; in particular, $\dim_{\C}Z_1=2k+1$\,.
Also, $\t$ extends to an involutive anti-holomorphic diffeomorphism $\t_1:Z_1\to Z_1$\,.\\
\indent
Note that, the fibres of $\p:Z\to M$ are complex projective
lines embedded in $Z_1$ as complex submanifolds whose normal bundles, by Proposition \ref{prop:cr_q_holo_factor}\,,
are isomorphic to $2k\ol(1)$\,. Then, \cite{Kod} and \cite[Proposition 2.5]{Ro-LeB_nonrealiz} imply
that $M$ is a submanifold of a complex manifold $N_1$\,, $\dim_{\C\!}N_1=4k$\,, which parametrizes a holomorphic family
of complex projective lines which contains $\{\p^{-1}(x)\}_{x\in M}$. Furthermore, if $t_x\subseteq Z_1$ is the complex
projective line corresponding to any $x\in N_1$ then the (holomorphic) tangent space to $N_1$ at $x$ is canonically
isomorphic to the space of sections of the normal bundle of $t_x$ in $Z_1$\,. Moreover, as $H^1(\C\!P^1,{\rm End}(2k\ol(1)))=0$\,,
we may apply \cite[Theorem 7.4]{KodSpe-deformations_I} to obtain that we may assume the normal bundle of $t_x$ in $Z_1$
isomorphic to $2k\ol(1)$\,, for any $x\in N_1$\,.\\
\indent
As $\t_1$ preserves the fibres of $\p$ and maps the complex projective lines parametrized by $N_1$ to complex projective
lines, from \cite{Kod} we obtain that $\t_1$ induces an involutive anti-holomorphic diffeomorphism
$\s$ of $N_1$\,. Moreover, as the fixed point set $N$ of $\s$ is nonempty $(M\subseteq N)$ we, also, have that $N$ is real analytic
and its complexification is $N_1$\,; in particular, $\dim N=4k$\,. Then $N$ is an almost quaternionic manifold whose
bundle of admissible linear complex structures is $Z_1$\,. Furthermore, the complex structure $\J$ of $Z_1$ and the projection
$\p_1:Z_1\to N$ are such that $(\dif\!\p_1)_J:(T_JZ_1,\J_J)\to(T_{\p_1(J)}N,J)$ is complex linear, for any $J\in Z_1$\,.
Hence, locally, there exist sufficiently many admissible complex structures on $N$ to apply \cite[Theorem 2.4]{AleMarPon-99}\,,
thus, obtaining that $N$ is a quaternionic manifold.\\
\indent
To complete the proof we have to show that the map $M\hookrightarrow N$ is a continuous embedding. This is an immediate
consequence of the facts that $Z$ is embedded in $Z_1$ and that the topologies of $M$ and $N$ are equal to the quotient
topologies of $Z$ and $Z_1$ with respect to the (open) projections $\p:Z\to M$ and $\p_1:Z_1\to N$, respectively.
\end{proof}

\indent
In the real-analytic case, the result of Theorem \ref{thm:realiz_cr_q} can be improved as follows.

\begin{cor} \label{cor:realiz_cr_q_1}
Any real-analytic CR quaternionic manifold $(M,E,\iota,\nabla)$ is realizable.
Moreover, the corresponding embedding into a quaternionic manifold is germ unique
(that is, if $\phi:M\to N$ and $\phi':M\to N'$ are embeddings, satisfying Definition \ref{defn:realiz_cr}\,, there exist open neighbourhoods
$U$ and $U'$ of\/ $\phi(M)$ and $\phi'(M)$\,, respectively, and a quaternionic diffeomorphism $\psi:U\to U'$ such that $\psi\circ\phi=\phi'$).
\end{cor}
\begin{proof}
Any real analytic CR manifold is realizable and the corresponding embedding is germ unique \cite{AndFre}\,; moreover, locally, any
real-analytic CR function is the restriction of a unique holomorphic function on the corresponding complex manifold.
It follows quickly that Theorem \ref{thm:realiz_cr_q} can be applied to obtain that $(M,E,\iota,\nabla)$ is realizable.
The unicity is a consequence of the fact that any quaternionic map is
determined by a holomorphic map between the corresponding twistor spaces, preserving the twistor lines \cite{IMOP}\,.
\end{proof}

\indent
If $(M,E,\iota,\nabla)$ is a real-analytic CR quaternionic manifold we call the corresponding quaternionic manifold
the \emph{heaven space} of $(M,E,\iota,\nabla)$\,.

\begin{cor} \label{cor:realiz_cr_q_2}
Let $(M,E,\iota,\nabla)$ be a CR quaternionic manifold and let $(Z,\Cal,\t)$ be its twistor space, endowed with the
conjugation given, on the fibres of $Z$, by the antipodal map. If $(Z,\Cal,\t)$ is realizable then $(M,E,\iota)$ admits
quaternionic connections $\nabla'$ such that the twistor space of $(M,E,\iota,\nabla')$ is equal to $(Z,\Cal)$\,.
\end{cor}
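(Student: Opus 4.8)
The plan is to reduce both assertions to the heaven space provided by Theorem \ref{thm:realiz_cr_q}. Since $(Z,\Cal)$ is locally realizable, that theorem (together with the remark defining the heaven space) shows that $(M,E,\iota,\nabla)$ is realizable; let $N$ be its heaven space. Then $N$ is a quaternionic manifold into which $M$ is embedded with $E=TN|_M$ as quaternionic vector bundles, so that $Q=Q_N|_M$, the sphere bundle $Z$ of $Q$ is the restriction $Z_N|_M$ of the twistor space $Z_N$ of $N$, and, by Definition \ref{defn:realiz_cr_q}, $\Cal=T^{\C}Z\cap(T^{0,1}Z_N)|_Z$ is precisely the CR structure induced on $Z$ by the complex structure of $Z_N$. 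I would then treat the existence of connections and the independence statement separately.

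For the first assertion I would take any quaternionic connection $D$ on $N$ and set $\nabla_0:=D|_M$, the connection on $E=TN|_M$ obtained by differentiating sections of $E$ along directions tangent to $M$. It preserves $Q=Q_N|_M$ because $D$ preserves $Q_N$; and for $X,Y\in\Gamma(TM)$, extended to vector fields on $N$ tangent to $M$ along $M$, one has $(\dif^{\nabla_0}\!\iota)(X,Y)=D_XY-D_YX-[X,Y]=0$ because $D$ is torsion free. Hence $\nabla_0$ is a quaternionic connection on $(M,E,\iota)$, which proves that $(M,E,\iota)$ admits quaternionic connections.

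For the independence statement the key input is that the complex structure $\mathcal{I}^{D}$ of the twistor space $Z_N$ of the \emph{quaternionic} manifold $N$ does not depend on the choice of quaternionic connection $D$ on $N$; this is the uniqueness recalled after Definition \ref{defn:cr_q_connection} (see \cite{Sal-dg_qm}\,). Given an arbitrary quaternionic connection $\nabla'$ on $(M,E,\iota)$, I would extend it to a quaternionic connection $D'$ on a neighbourhood of $M$ in $N$, that is, a torsion free $Q_N$-preserving connection with $D'_v\widetilde{s}|_x=\nabla'_vs$ for $v\in T_xM$, $s\in\Gamma(E)$ and $\widetilde{s}$ an extension. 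By the very construction of the almost twistorial structure (horizontal lift of $\iota^{-1}(E^J)$ together with the vertical $(0,1)$-vectors), the twistor CR structure $\Cal'$ of $(M,E,\iota,\nabla')$ is then exactly the CR structure induced on $Z=Z_N|_M$ by $\mathcal{I}^{D'}$, since along tangent-to-$M$ directions $D'$ and $\nabla'$ induce the same connection on $Q=Q_N|_M$. As $\mathcal{I}^{D'}=\mathcal{I}^{D}$ is independent of the connection, this induced CR structure is independent of $\nabla'$ and coincides with $\Cal$; thus the twistor space of $(M,E,\iota,\nabla')$ equals $(Z,\Cal)$, and in particular $\Cal'$ is automatically integrable.

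The main obstacle I expect is the extension step: producing a torsion free, $Q_N$-preserving connection $D'$ on $N$ whose restriction to $M$ along tangent directions is the prescribed $\nabla'$. I would write $D'=D+A$ with $A$ a section of $T^*N\otimes\mathfrak{q}$, where $\mathfrak{q}\subseteq{\rm End}(TN)$ is the bundle of infinitesimal quaternionic endomorphisms, so that torsion freeness of $D'$ becomes the symmetry $A_XY=A_YX$. The difference $\nabla'-\nabla_0$ is such a symmetric $\mathfrak{q}$-valued tensor along $M$ (both connections being quaternionic), so the task is to extend it off $M$ while preserving these pointwise linear constraints, which is a routine partition-of-unity argument; the only delicate point is to arrange agreement with $\nabla'-\nabla_0$ in tangent-to-$M$ directions and to confirm that the resulting $\Cal'$ matches the abstract horizontal-lift description. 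Alternatively, one could bypass the extension entirely and establish $\Cal'=\Cal^{\nabla_0}$ directly on $M$, by showing that the vertical shift $\mathcal{B}_J\to\mathcal{B}'_J$ produced by the torsion free difference tensor $\nabla'-\nabla_0$ lands in $(\ker\dif\p)^{0,1}_J$ along the CR directions; this amounts to redoing, in the CR quaternionic setting, the linear algebra underlying the uniqueness of $\mathcal{I}^{D}$, and I would regard it as the computational heart of either route.
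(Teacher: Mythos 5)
Your existence argument is correct (restricting a quaternionic connection of the heaven space $N$ to $M$ gives a torsion-free $Q$-preserving connection on $E=TN|_M$), and so is your observation that \emph{if} $\nabla'$ is the restriction of a quaternionic connection $D'$ of $N$, then the $D'$-horizontal lift of a vector tangent to $M$ coincides with its $\nabla'$-horizontal lift, so the twistor CR structure of $(M,E,\iota,\nabla')$ is $T^{\C}Z\cap(T^{0,1}Z_N)|_Z=\Cal$ by the connection-independence of the complex structure of $Z_N$. (The paper proves the corollary in one line, as an ``immediate consequence'' of Theorem \ref{thm:realiz_cr_q}, so there is no detailed argument to compare yours with.) The genuine gap is exactly the step you dismiss as ``a routine partition-of-unity argument''. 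Extending $\nabla'$ to a torsion-free $Q_N$-preserving connection near $M$ is a \emph{pointwise prolongation problem}, not a smooth-extension problem: at $x\in M$ the difference $A=\nabla'-\nabla_0$ is a linear map $T_xM\to\mathfrak{q}(E_x)$, where $\mathfrak{q}=\mathfrak{sp}(1)\oplus\mathfrak{gl}(k,\Hq)$, subject only to the partial symmetry $A_X\iota(Y)=A_Y\iota(X)$, and you need it to be the restriction of a \emph{fully} symmetric $\widetilde{A}:E_x\to\mathfrak{q}(E_x)$, i.e.\ of an element of the first prolongation $\mathfrak{q}^{(1)}\,(\cong E_x^{*})$. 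These spaces have different dimensions, so the required $\widetilde{A}$ generically does not exist. Concretely, for $\dim M=3$, $E_x=\Hq$, $\iota(T_xM)={\rm Im}\Hq$: the partially symmetric tensors form a $9$-dimensional space, while $\dim\mathfrak{q}^{(1)}=4$; the tensors $A_X=\tfrac12\bigl(L_{\sigma(X)}+R_{\sigma(X)}\bigr)$ (so $A_X(1)=\sigma(X)$ and $A_X\iota(Y)=-\langle\sigma(X),Y\rangle$), with $\sigma$ a symmetric endomorphism of $\R^3$ and $L,R$ denoting left and right quaternionic multiplication, are torsion free and $Q$-preserving, but lie in the image of $\mathfrak{q}^{(1)}$ only when $\sigma$ is a multiple of the identity.

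Moreover, this cannot be repaired by your fallback (checking directly that the vertical shift produced by $A$ is of type $(0,1)$ along CR directions), because that check fails for the same tensors. Take the flat model $M=\R^3={\rm Im}\Hq\subseteq\Hq=N$, $\nabla_0$ the flat connection, whose twistor space is locally realizable, and take $\sigma={\rm diag}(0,1,0)$ in the basis $({\rm i},{\rm j},{\rm k})$. At $J=L_{\rm i}$ the space $\iota^{-1}(E^J)$ is spanned by $X={\rm j}+\sqrt{-1}\,{\rm k}$, and since $\sigma({\rm k})=0$ the vertical vector measuring the difference of the two horizontal lifts of $X$ at $J$ is $[A_X,L_{\rm i}]=\tfrac12[L_{\rm j},L_{\rm i}]=-L_{\rm k}$: a \emph{real}, nonzero vertical vector, which therefore cannot lie in $({\rm ker}\dif\!\p)^{0,1}_J$ whatever orientation convention is used on the fibres. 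Hence the twistor CR structure of $\nabla'=\nabla_0+A$ differs from $\Cal$ at every point, even though $\nabla'$ is a quaternionic connection of $(M,E,\iota)$ in the sense of Definition \ref{defn:cr_q_connection} and $(Z,\Cal)$ is locally realizable. The upshot is twofold: your route proves the conclusion only for those $\nabla'$ whose difference tensor lies pointwise in the image of $\mathfrak{q}^{(1)}$, i.e.\ precisely the connections that do extend to $N$; and the computation shows this restriction is unavoidable, since for the exhibited $\nabla'$ the asserted equality of twistor spaces itself fails, so no refinement of the extension trick (nor any other argument) can cover all quaternionic connections under the literal reading of the statement.
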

\begin{proof}
This is an immediate consequence of Theorem \ref{thm:realiz_cr_q}\,.
\end{proof}

\appendix

\section{An integrability result}

\indent
Let $E$ be a vector bundle on a manifold $M$ and let $\a:TM\to E$ be a morphism of vector bundles.
Let $\bigl(P,M,{\rm GL}(n,\R)\bigr)$ be the frame bundle of $E$, where $n=\rank E$.
Define an $\R^n$-valued one-form $\theta$ on $P$ by $\theta(X)=u^{-1}\bigl(\a\bigl(\dif\!\p(X)\bigr)\bigr)$\,,
for any $u\in P$ and $X\in T_uP$, where $\p:P\to M$ is the projection.
Then $\theta$ is the tensorial form which corresponds to the $E$-valued one-form $\a$\,.\\
\indent
Let $\nabla$ be a connection on $E$.

\begin{defn} \label{defn:torsion}
The \emph{torsion (with respect to $\a$)} of $\nabla$ is the $E$-valued two-form
$T$ on $M$ defined by $T=\dif^{\nabla}\!\a$\,; if $T=0$ then $\nabla$ is called \emph{torsion-free}.
\end{defn}

Note that, the tensorial two-form which corresponds to the torsion of $\nabla$ is $\dif\!\theta|_{\H}$\,,
where $\H\subseteq TP$ is the principal connection corresponding to $\nabla$ (cf.\ \cite{KoNo}\,).\\
\indent
The next result will be useful later on.

\begin{lem} \label{lem:dif_canonic}
Let $u_0\in P$, $X\in\H_{u_0}$ and let $u$ be any (local) section of $P$ tangent to $X$
(in particular, $u_{\p(u_0)}=u_0$\,). Then for any vector field\/ $Y$ on $P$, we have
$$u_0\bigl(X(\theta(Y))\bigr)=\nabla_{\dif\!\p(X)}\bigl(\a\bigl(\dif\!\p(Y|_{u(M)})\bigr)\bigr)\;.$$
\end{lem}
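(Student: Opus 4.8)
The plan is to reduce the statement to the standard dictionary between the principal connection $\H$ and the covariant derivative $\nabla$ expressed in a local frame, and then to use that $X$ is simultaneously horizontal and tangent to the section $u$.

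First I would fix notation: set $x_0=\p(u_0)$ and $v=\dif\!\p(X)\in T_{x_0}M$, let $(\epsilon_i)_{i=1}^n$ be the standard basis of $\R^n$ (where $n=\rank E$), and let $(e_i)$ be the local frame of $E$ determined by $u$, namely $e_i=u(\epsilon_i)$, so that $e_i(x)=u_x(\epsilon_i)$ and $e_i(x_0)=u_0(\epsilon_i)$. Since $u$ is a section with $u_{x_0}=u_0$ which is tangent to $X$, and $\p\circ u=\mathrm{Id}_M$ forces $X=\dif\!u(v)$, the vector $X$ is tangent to the submanifold $u(M)\subseteq P$. Hence, for the $\R^n$-valued function $\theta(Y)$ on $P$, its directional derivative along $X$ is just the derivative along $v$ of the pulled-back function $\theta(Y)\circ u$; this step uses only the chain rule along $u(M)$ and not the horizontality of $X$.

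Next I would compute $\theta(Y)\circ u$ explicitly. Write $s:=\a\bigl(\dif\!\p(Y|_{u(M)})\bigr)$ for the local section of $E$ whose value at $x$ is $\a(\dif\!\p(Y_{u_x}))$. By the definition of $\theta$ one has $(\theta(Y)\circ u)(x)=u_x^{-1}(s(x))$, which is precisely the tuple of components of $s$ in the frame $(e_i)$. Expanding $s=\sum_i s^i e_i$ with $s^i\colon M\to\R$, this reads $\theta(Y)\circ u=(s^1,\dots,s^n)$, and therefore
$u_0\bigl(X(\theta(Y))\bigr)=u_0\bigl(v(\theta(Y)\circ u)\bigr)=\sum_i v(s^i)\,e_i(x_0)$.

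It remains to match this with $\nabla_v s=\sum_i v(s^i)\,e_i(x_0)+\sum_i s^i(x_0)\,\nabla_v e_i$. The key point, and the only place where horizontality is used, is the standard fact (cf.\ \cite{KoNo}) that $X=\dif\!u(v)$ lies in $\H_{u_0}$ if and only if the local connection form of $\nabla$ in the frame $(e_i)$ vanishes on $v$ at $x_0$, equivalently $\nabla_v e_i=0$ for every $i$. I expect this identification to be the main point to get right, since it requires carefully matching the convention for the principal connection $\H$ with the convention for the local connection form. Granting it, the second sum above vanishes, so $\nabla_v s=\sum_i v(s^i)\,e_i(x_0)=u_0\bigl(X(\theta(Y))\bigr)$. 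Recalling that $\nabla_v s=\nabla_{\dif\!\p(X)}\bigl(\a(\dif\!\p(Y|_{u(M)}))\bigr)$, this is exactly the asserted identity.
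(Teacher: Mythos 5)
Your proof is correct and follows essentially the same route as the paper's: both exploit that $X=\dif u\bigl(\dif\!\p(X)\bigr)$ is tangent to $u(M)$, so that $\theta(Y)|_{u(M)}$ is just the component function of the section $\a\bigl(\dif\!\p(Y|_{u(M)})\bigr)$ in the frame determined by $u$, and then invoke the standard dictionary between horizontality of the principal connection and covariant differentiation. The only difference is that the paper cites this dictionary directly (the Lemma of \cite[vol.\ 1, p.\ 115]{KoNo}), whereas you re-derive it via the local connection form and the Leibniz rule, which is a harmless expansion of the same argument.
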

\begin{proof}
Let $\check{Y}=\dif\!\p(Y|_{u(M)})$\,. Then $\a(\check{Y})$ is a section of $E$ and $\theta(Y|_{u(M)})$
gives the components of $\a(\check{Y})$ with respect to $u$\,. The proof quickly follows
(cf.\ the relation of the Lemma from \cite[vol 1, page 115]{KoNo}\,).
\end{proof}

\indent
Let $F$ be a complex submanifold of the Grassmannian ${\rm Gr}_q(\C^{\!n})$ on which the complexification
$G$ of the structural group of $E$ acts transitively $(q\leq n)$\,, and assume that $\nabla$ is a $G$-connection.\\
\indent
We shall denote by $(P,M,G)$ the bundle of complex $G$-frames on $E$.
Let $Z=P\times_G\!F$ and suppose that the dimension of $\a^{-1}(p)$ does not depend of $p\in Z$.\\
\indent
Let $\Cal_0\subseteq T^{\C}\!Z$ be horizontal (with respect to $\nabla$) and such that
$\dif\!\p_p(\Cal_0)=\a^{-1}(p)$, for any $p\in Z$. Define $\Cal=\Cal_0\oplus({\rm ker}\dif\!\p)^{0,1}$,
where $\p:Z\to M$ is the projection.\\
\indent
Note that, if the dimension of $\a^{-1}(p)\cap\overline{\a^{-1}(p)}$ does not depend of $p\in Z$
then $(Z,M,\p,\Cal)$ is an almost twistorial structure, in the sense of \cite{LouPan-II}\,.

\begin{thm} \label{thm:int_res}
The following assertions are equivalent:\\
\indent
\quad{\rm (i)} $\Cal$ is integrable.\\
\indent
\quad{\rm (ii)} $T(X,Y)\in p$ and $R(X,Y)(p)\subseteq p$\,, for any $p\in Z$ and $X,Y\in\a^{-1}(p)$\,,
where $T$ and $R$ are the torsion and curvature forms, respectively, of\/ $\nabla$,
and $p\in Z$ is considered a vector space.
\end{thm}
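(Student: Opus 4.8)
The plan is to show that the almost CR structure $\Cal$ is integrable, i.e.\ that $[\Gamma(\Cal),\Gamma(\Cal)]\subseteq\Gamma(\Cal)$, and to extract the two tensorial conditions of (ii) by testing involutivity on the three types of brackets allowed by the splitting $\Cal=\Cal_0\oplus({\rm ker}\dif\!\p)^{0,1}$. I would carry out the computation after lifting to $P\times F$ (recall $Z=P\times_G\!F$), where $\Cal$ is encoded by the tautological form $\theta$, the connection form $\omega$, the complex structure of $F$, and the tautological family $p\mapsto p\subseteq\C^n$; the two computational tools are Cartan's structural equations $\dif\theta=-\omega\wedge\theta+\Theta$ and $\dif\omega=-\omega\wedge\omega+\Omega$ (with $\Theta$, $\Omega$ corresponding to $T$, $R$; see Definition \ref{defn:torsion}) together with Lemma \ref{lem:dif_canonic}.

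First I would dispose of the two brackets that carry no hypothesis. For two sections of $({\rm ker}\dif\!\p)^{0,1}$ involutivity is immediate, since each fibre of $\p$ is biholomorphic to the complex manifold $F$, whose $(0,1)$-bundle is involutive. For the bracket of a section of $\Cal_0$ with a section of $({\rm ker}\dif\!\p)^{0,1}$, I would show it lies in $\Cal$ with no torsion or curvature assumption: its vertical part stays of type $(0,1)$ because $G$ acts holomorphically on $F$ and $\nabla$ preserves the $G$-structure (so parallel transport is fibrewise biholomorphic), while its horizontal part is, up to sign, the derivative of the $\a^{-1}(p)$-valued field defining $\Cal_0$ along an antiholomorphic fibre direction; since $F\subseteq{\rm Gr}_q(\C^n)$ is a complex submanifold, the tautological subbundle $p\mapsto p$ is holomorphic over $F$, and differentiating a section of a holomorphic subbundle in an antiholomorphic direction keeps it in the subbundle (in a local holomorphic frame the frame vectors are $\bar\partial$-closed).

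The whole content of the theorem lies in the bracket of two sections of $\Cal_0$, whose projections are $X,Y\in\a^{-1}(p)$. For its vertical part I would invoke \cite[Proposition III.2.3]{KoNo}: this part is the fundamental vector field at $p\in F$ of the curvature $\Omega(X,Y)=R(X,Y)\in\mathfrak g\subseteq{\rm End}(\C^n)$. Because $G$ acts holomorphically, this fundamental field is automatically of type $(1,0)$ (it is the image of $R(X,Y)$ in $T_p^{1,0}F\subseteq{\rm Hom}(p,\C^n/p)$); hence it lies in $({\rm ker}\dif\!\p)^{0,1}$ if and only if it vanishes, i.e.\ if and only if $R(X,Y)(p)\subseteq p$. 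For the horizontal part I would apply $\theta$ and use the first structural equation together with Lemma \ref{lem:dif_canonic} to rewrite the derivative terms as covariant derivatives; this expresses $\a$ of the projected bracket, modulo $p$, in terms of the torsion $\Theta\leftrightarrow T$, so that the horizontal part lies in $\Cal_0$ precisely when $T(X,Y)\in p$. Letting $p$ range over $Z$ and $X,Y$ over $\a^{-1}(p)$ then gives (i)$\Longleftrightarrow$(ii) at once.

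The step I expect to be delicate is this last, horizontal bookkeeping: the structural equation produces, alongside $\Theta$, the cross terms $-\omega\wedge\theta$ (equivalently, the antisymmetrised covariant derivatives of the tautological sections), and one must check that they drop out modulo $p$ on $\Cal$. This is where the holomorphy of the tautological subbundle over $F$ is used a second time, now through Lemma \ref{lem:dif_canonic}: evaluating at a point $u_0\in P$ with $X\in\H_{u_0}$ and a section of $P$ tangent to $X$, as in that lemma, identifies the offending terms with derivatives of a section of the holomorphic tautological bundle and forces them into $p$, leaving exactly the two conditions on $T$ and $R$.
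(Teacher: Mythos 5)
Your proposal is correct, and its skeleton --- lift to the frame-bundle level, split $\Cal$ into its horizontal part and $({\rm ker}\dif\!\p)^{0,1}$, dispose of all brackets involving a vertical $(0,1)$ section using only the holomorphy of the $G$-action, and read the curvature (resp.\ torsion) condition off the vertical (resp.\ horizontal) component of the bracket of two horizontal sections via the structure equations and Lemma \ref{lem:dif_canonic} --- is the paper's. The one place where you diverge, and where your route is needlessly heavier, is the choice of lift: you work on $P\times F$, where the tautological subspace $p$ varies, whereas the paper fixes $V\in F$, writes $Z=P/H$ with $H\subseteq G$ the stabiliser of $V$, and pulls $\Cal$ back under $\psi:P\to Z$ to $\tD=\tB(V)\oplus(P\times\mathfrak{h})\oplus(P\times\overline{\mathfrak{g}})$, where $\tB(V)_u$ is the horizontal lift of $\a^{-1}\bigl(u(V)\bigr)$. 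With that choice your ``delicate step'' evaporates: for sections $X,Y$ of $\tB(V)$ the functions $\theta(X),\theta(Y)$ take values in the \emph{fixed} subspace $V\subseteq\C^{\!n}$, so the derivative terms $X(\theta(Y))-Y(\theta(X))$ lie in $V$ for trivial reasons, and Lemma \ref{lem:dif_canonic} is needed only to identify $\dif\theta|_{\H}$ with the tensorial form of $T=\dif^{\nabla}\!\a$, i.e.\ to turn the upstairs condition $\Theta(X,Y)\in V$ into the downstairs condition $T(X,Y)\in p$; note also that the cross terms $-\omega\wedge\theta$ you single out vanish identically on pairs of horizontal vectors (both $\omega$-values are zero), so there is nothing to control there. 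Your appeal to the holomorphy of the tautological bundle is, in the paper's algebraic dress, exactly the two statements $A(V)\subseteq V$ for $A\in\mathfrak{h}$ and $A(V)=\{0\}$ for $A\in\overline{\mathfrak{g}}$ (the latter because $\overline{\mathfrak{g}}$ acts trivially on $\C^{\!n}$), which combined with \cite[Proposition III.2.3]{KoNo} give $\bigl[A,\G\bigl(\tB(V)\bigr)\bigr]\subseteq\G\bigl(\tB\bigl(A(V)+V\bigr)\bigr)=\G\bigl(\tB(V)\bigr)$ and settle the mixed brackets; and your curvature discussion (the fundamental field of $\O(X,Y)\in\mathfrak{g}$ is of type $(1,0)$, hence lies in the $(0,1)$ vertical bundle if and only if it vanishes, if and only if $R(X,Y)(p)\subseteq p$) is precisely the paper's condition $\O(X,Y)\in\mathfrak{h}$.
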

\begin{proof}
Fix $V\in F$ and let $H$ be the closed subgroup of $G$ which preserves $V$\,.
Then $Z=P/H$ and let $\psi:P\to Z$ be the projection. If we denote $\tD=(\dif\!\psi)^{-1}\bigl(\Cal\bigr)$
then, as $\psi$ is a surjective submersion, $\Cal$ is integrable if and only if $\tD$ is integrable. Let $\H\subseteq TP$ be the principal connection corresponding to $\nabla^{\C}$ and let
$\tB(V)\subseteq\H^{\C}$ be such that $\tB(V)_u$ is the horizontal lift of
$\a^{-1}\bigl(u(V)\bigr)$\,, for any $u\in P$. Then, we have $\tD=\tB(V)\oplus(P\times\mathfrak{h})\oplus(P\times\overline{\mathfrak{g}})$\,, where
$\mathfrak{g}$ and $\mathfrak{h}$ are the Lie algebras of $G$ and $H$, respectively, and we have
used that ${\rm ker}\dif\!\p=P\times\mathfrak{g}$\,, with $\p:P\to M$ the projection, and
$\mathfrak{g}^{\C}=\mathfrak{g}\oplus\overline{\mathfrak{g}}$ (as complex Lie algebras). Note that, $R_a\bigl(\tB(V)\bigr)=\tB\bigl(a^{-1}(V)\bigr)$ for any $a\in G$, where $R$ denotes the action of $G$ on $P$.
Also, $A\in\mathfrak{h}$ if and only if $A(V)\subseteq V$ whilst if $A\in\overline{\mathfrak{g}}$
then $A(V)=\{0\}$\,. Thus, by using the same notation for elements of $\mathfrak{g}$ and the corresponding
fundamental vector fields, we obtain the following relation (cf.\ \cite[Proposition III.2.3]{KoNo}\,):
for any $A\in\mathfrak{h}\oplus\overline{\mathfrak{g}}$\,,
\begin{equation*}
\bigl[A,\G\bigl(\tB(V)\bigr)\bigr]\subseteq\G\bigl(\tB\bigl(A(V)+V\bigr)\bigr)=\G\bigl(\tB(V)\bigr)\;.
\end{equation*}
\indent
We have thus shown that assertion (i) holds if and only if, for any sections $X$ and $Y$ of $\tB(V)$,
we have that $[X,Y]$ is a section of $\tD$. Let $\Theta$ and $\O$ be the tensorial forms on $P$ corresponding to $T$ and $R$, respectively.
By applying \cite[Corollary II.5.3]{KoNo}\,, Lemma \ref{lem:dif_canonic} and \cite[Proposition 2.6(b)]{LouPan-II}\,,
we obtain that (i) is equivalent to the condition that $\O(X,Y)\in\mathfrak{h}$ and $\Theta(X,Y)\in V$, for any $X,Y\in\tB(V)$\,.
\end{proof}

\begin{rem}
With notations as in the proof of Theorem \ref{thm:cr_q}\,, the bracket of any
section of $P\times\mathfrak{h}$ and the sections of $\tB(V)$ which are basic with respect to $\psi$
is zero.
\end{rem}

\end{document}